\theoremstyle{plain}
\newtheorem*{mainthm}{Th\'eor\`eme}
\newtheorem{thm}{Th\'eor\`eme}[subsection]
\newtheorem{lem}[thm]{Lemme}
\newtheorem{cor}[thm]{Corollaire}
\newtheorem{prop}[thm]{Proposition}
\newtheorem{propdfn}[thm]{Proposition-d\'efinition}
\theoremstyle{definition}
\newtheorem{dfn}[thm]{D\'efinition}
\newtheorem{rmq}[thm]{Remarque}
\numberwithin{equation}{subsection}
\DeclareMathOperator{\Spec}{Spec}
\DeclareMathOperator{\pic}{Pic}
\DeclareMathOperator{\Div}{Div}
\DeclareMathOperator{\diviseur}{div}
\DeclareMathOperator{\Divp}{DivPrinc}
\DeclareMathOperator{\DivRat}{DivRat}
\DeclareMathOperator{\cub}{Cub}
\newcommand{\A}{\mathcal{A}}
\newcommand{\gm}{\mathbf{G}_{{\rm m}}}
\newcommand{\gmk}{\mathbf{G}_{{\rm m},K}}
\newcommand{\gmu}{\mathbf{G}_{{\rm m},U}}
\newcommand{\gmeta}{\mathbf{G}_{{\rm m},\eta}}
\newcommand{\gmlog}{\mathbf{G}_{{\rm m,log}}}
\newcommand{\jcl}{j_{{\rm cl}}}
\DeclareMathOperator{\homr}{Hom}
\DeclareMathOperator{\Hom}{\underline{Hom}}
\DeclareMathOperator{\ext}{Ext}
\DeclareMathOperator{\Ext}{\underline{Ext}}
\DeclareMathOperator{\biext}{Biext}
\begin{document}

\title{Prolongement de biextensions et accouplements en cohomologie log plate}

\author{Jean Gillibert}

\date{avril 2009}

\maketitle

\begin{abstract}
Nous revisitons, dans le langage des log sch\'emas, le probl\`eme de prolongement de biextensions de sch\'emas en groupes commutatifs lisses  par le groupe multiplicatif \'etudi\'e par Grothendieck dans \cite{gro7}. Nous montrons que ce probl\`eme admet en g\'en\'eral une solution dans la cat\'egorie des faisceaux pour la topologie log plate, contrairement \`a ce que l'on peut observer en topologie fppf pour laquelle Grothendieck a d\'efini des obstructions monodromiques. En particulier, dans le cas d'une vari\'et\'e ab\'elienne et de sa duale, il est possible de prolonger la biextension de Weil sur la totalit\'e des mod\`eles de N\'eron ; ceci permet de d\'efinir un accouplement sur les points qui combine l'accouplement de classes d\'efini par Mazur et Tate et l'accouplement de monodromie.

\medskip

\begin{otherlanguage}{english}
\begin{center}
\textbf{Abstract}
\end{center}

We study, using the language of log schemes, the problem of extending  biextensions of smooth commutative group schemes by the multiplicative group. This was first considered by Grothendieck in \cite{gro7}. We show that this problem admits a solution in the category of sheaves for Kato's log flat topology, in contradistinction to what can be observed using the fppf topology, for which monodromic obstructions were defined by Grothendieck. In particular, in the case of an abelian variety and its dual, it is possible to extend the Weil biextension to the whole N\'eron model. This allows us to define a pairing on the points which combines the class group pairing defined by Mazur and Tate and Grothendieck's monodromy pairing.
\end{otherlanguage}
\end{abstract}


\section{Introduction}

Soit $S$ un trait, de point g\'en\'erique $\eta=\Spec(K)$, et soit $A_K$ une $K$-vari\'et\'e ab\'elienne. Il est bien connu que la vari\'et\'e duale $A_K^t$ de $A_K$ est munie d'un isomorphisme canonique
\begin{equation}
\label{isocanonique}
\begin{CD}
A_K^t @>\sim >> \Ext^1_{\text{\rm \'et}}(A_K,\gmk)
\end{CD}
\end{equation}
qui est parfois utilis\'e comme d\'efinition de $A_K^t$. On peut se demander ce qu'il advient de cet isomorphisme si l'on remplace les vari\'et\'es ab\'eliennes $A_K$ et $A_K^t$ par leurs mod\`eles de N\'eron sur $S$, que nous noterons respectivement $\A$ et $\A^t$.

Grothendieck a utilis\'e le concept de biextension (introduit par Mumford) pour \'etudier ce probl\`eme. Plus pr\'ecis\'ement, prolonger l'isomorphisme \eqref{isocanonique} au niveau des mod\`eles de N\'eron \'equivaut \`a prolonger la biextension de Weil $W_K$ de $(A_K, A_K^t)$ par $\gmk$ en une biextension de $(\A,\A^t)$ par $\gm$. Soit $\Phi$ (resp. $\Phi'$) le groupe des composantes de (la fibre sp\'eciale de) $\A$ (resp. $\A^t$), Grothendieck construit dans \cite[expos\'e VIII, th\'eor\`eme 7.1, b)]{gro7} un accouplement (dit  de monodromie)
$$
\begin{CD}
\Phi\times\Phi' @>>> \mathbb{Q}/\mathbb{Z} \\
\end{CD}
$$
qui repr\'esente l'obstruction \`a l'existence d'un tel prolongement. Plus pr\'ecis\'ement, si $\Gamma$ et $\Gamma'$ sont des sous-groupes respectifs de $\Phi$ et $\Phi'$, alors $W_K$ se prolonge en une biextension de $(\A^{\Gamma}, \A^{t,\Gamma'})$ par $\gm$ si et seulement si $\Gamma$ et $\Gamma'$ sont orthogonaux sous l'accouplement (voir le paragraphe \ref{accouplements}), lequel prolongement induit alors un morphisme
$$
\begin{CD}
\gamma:\A^{t,\Gamma'} @>>> \Ext^1_{\text{\rm \'et}}(\A^{\Gamma},\gm)
\end{CD}
$$
entre faisceaux sur le site $(Lis/S)_{\text{\rm \'et}}$ des $S$-sch\'emas lisses, muni de la topologie \'etale. Si en outre $\Gamma=0$ et $\Gamma'=\Phi'$ (ou si l'accouplement de monodromie est non d\'eg\'en\'er\'e et si $\Gamma'$ est l'orthogonal de $\Gamma$), alors $\gamma$ est un isomorphisme. Ce r\'esultat est explicit\'e dans \cite[Appendix C, Prop. C.14]{milne86}. Notons que l'id\'ee de consid\'erer le site lisse appara\^it d\'ej\`a (sous forme d'une
\og{}Autocritique\fg{}) dans \cite[expos\'e VIII, 6.9]{gro7}.

Dans cet article, nous nous sommes int\'eress\'es au probl\`eme de prolongement des biextensions dans la cat\'egorie des log sch\'emas, munie de la topologie Kummer log plate (voir le paragraphe \ref{logref} pour les log r\'ef\'erences). Munissons le trait $S$ de sa log structure canonique ; en travaillant dans la cat\'egorie des faisceaux pour la topologie Kummer log plate sur $S$, nous montrons le r\'esultat suivant (cf. th\'eor\`eme \ref{sga7style}) :

\begin{mainthm}
\begin{enumerate}
\item[$(i)$] Soit $P$ un sch\'ema en groupes commutatif lisse de type fini sur $S$. Alors le morphisme de restriction \`a la fibre g\'en\'erique
$$
\ext^1_{\text{\rm kpl}}(P,\gm)\longrightarrow \ext^1_{\text{\rm pl}}(P_{\eta},\gmeta)
$$
est un isomorphisme.
\item[$(ii)$] Soient $P$ et $Q$ deux sch\'emas en groupes commutatifs lisses de type fini sur $S$. Alors le morphisme de restriction \`a la fibre g\'en\'erique
$$
\biext^1_{\text{\rm kpl}}(P,Q;\gm)\longrightarrow \biext^1_{\text{\rm pl}}(P_{\eta},Q_{\eta};\gmeta)
$$
est un isomorphisme.
\end{enumerate}
\end{mainthm}

Une fois ce r\'esultat \'etabli on retrouve, par le truchement d'une suite spectrale comparant la cohomologie log plate avec la cohomologie fppf, les obstructions d\'efinies par Grothendieck dans \cite[expos\'e VIII, th\'eor\`eme 7.1]{gro7} (cf. th\'eor\`eme \ref{retourmonodromie}). Dans le cas particulier o\`u $(P,Q)=(\A,\A^t)$, on obtient ainsi une interpr\'etation logarithmique de l'accouplement de monodromie (cf. proposition \ref{restrwlog}).

Notre principale motivation est l'existence, dans le monde logarithmique, d'une unique biextension $W^{\rm log}\in\biext^1_{\text{\rm kpl}}(\A,\A^t;\gm)$ prolongeant la biextension de Weil. Il d\'ecoule en outre du th\'eor\`eme ci-dessus que le morphisme
$$
\begin{CD}
\gamma^{\rm log}:\A^t @>>> \Ext^1_{\text{\rm kpl}}(\A,\gm)
\end{CD}
$$
induit par $W^{\rm log}$ est un isomorphisme entre faisceaux sur le site lisse de $S$.

Dans le cas global (o\`u la base $S$ n'est plus forc\'ement un trait), on peut d\'efinir \`a l'aide de $W^{\rm log}$ un accouplement de classes logarithmique (cf. d\'efinition \ref{defpairings}) qui combine l'accouplement de classes --- d\'efini par Mazur et Tate dans \cite{mt} --- et celui de monodromie (cf. proposition \ref{proppairings}).

La biextension $W^{\rm log}$  est un outil qui nous permettra, dans l'article \cite{gil5}, d'\'etudier la ramification de certains torseurs (sous des sch\'emas en groupes finis et plats) obtenus gr\^ace au cobord d'une isog\'enie entre vari\'et\'es ab\'eliennes. Plus pr\'ecis\'ement, on peut affiner les r\'esultats de \cite{gil1} et \cite{gil2} en se d\'ebarassant des conditions sur les groupes de composantes, le prix \`a payer \'etant le passage des sch\'emas aux log sch\'emas. Nous renvoyons le lecteur \`a l'introduction de \cite{gil5} pour plus de d\'etails.

R\'esumons bri\`evement le contenu de cet article. Dans la section \ref{sorites} nous introduisons les outils logarithmiques, les sites et les faisceaux dont nous aurons besoin.

Dans la section \ref{fibreslog} nous calculons le groupe $H^1_{\text{\rm kpl}}(S,\gm)$  en cohomologie Kummer log plate, $S$ \'etant un sch\'ema  n\oe{}th\'erien r\'egulier muni de la log structure d\'efinie par un diviseur \`a croisements normaux. Nous donnons une description de ce groupe analogue \`a celle du groupe de Picard usuel en termes de diviseurs, la diff\'erence \'etant que les composantes irr\'eductibles du diviseur d\'efinissant la log structure apparaissent avec des coefficients rationnels dans le groupe des \og{}diviseurs logarithmiques\fg{}. Nous en d\'eduisons, via la th\'eorie de Kummer, une description du groupe $H^1_{\text{\rm kpl}}(S,\mu_n)$.

Dans la section \ref{applications}, nous montrons le th\'eor\`eme \ref{sga7style} qui constitue le r\'esultat principal de cet article. Nous appliquons ce r\'esultat \`a la d\'efinition de l'accouplement de classes logarithmique puis, \`a l'aide de la th\'eorie des torseurs cubistes, nous relions ce nouvel accouplement \`a une construction de Moret-Bailly \cite{mb}, en retrouvant au passage l'expression, due \`a Bosch et Lorenzini \cite{bl}, de l'accouplement de monodromie en termes du symbole de N\'eron.

Pour conclure, l'auteur signale que si l'on remplace la topologie Kummer log plate par la topologie Kummer log \'etale, la plupart des  r\'esultats \'enonc\'es ici se transposent facilement, dans une forme affaiblie tenant compte de la torsion des faisceaux qui n'est pas premi\`ere aux caract\'eristiques r\'esiduelles des points de $S$ (voir la remarque \ref{logetalefailure}).


\section{Sorites faisceautiques}
\label{sorites}


\subsection{Log sch\'emas}
\label{logref}
Nous renvoyons le lecteur \`a \cite[section 1]{illusie} pour les d\'efinitions de base concernant les log sch\'emas. Toutes les log structures consid\'er\'ees ici sont fines et satur\'ees (fs).
Si $T$ est un log sch\'ema, nous noterons $M_T$ le faisceau de mono\"ides (sur le petit site \'etale de $T$) d\'efinissant la log structure de $T$. Nous noterons $M_T^{\rm gp}$ le faisceau en groupes des fractions totales du faisceau de mono\"ides $M_T$.

Nous renvoyons \`a Kato \cite{kato2} et Nizio\l{} \cite{niziol} pour les d\'efinitions et propri\'et\'es des topologies Kummer log plate et Kummer log \'etale (pour \'epargner au lecteur de trop nombreuses r\'ep\'etitions, nous omettrons tant\^ot le log, tant\^ot le Kummer). Nous travaillerons en g\'en\'eral avec la premi\`ere.

Si $X$ est un sch\'ema (resp. un log sch\'ema), nous noterons $(Sch/X)$ (resp. $(fs/X)$) la cat\'egorie des sch\'emas (resp. des log sch\'emas fs) sur $X$. Nous utiliserons les symboles \'et et pl pour d\'esigner les topologies \'etale et fppf classiques, ainsi que k\'et et kpl pour d\'esigner les topologies Kummer log \'etale et Kummer log plate.
Si $C$ est l'une de ces cat\'egories et si top est l'une de ces topologies, nous noterons $C_{\text{top}}$ le site obtenu en munissant $C$ de la topologie top.

Le foncteur (de la cat\'egorie $(fs/X)^{op}$ dans celle des groupes ab\'eliens)
$$
\gmlog:T\longmapsto \Gamma(T,M_T^{\rm gp})
$$
est un faisceau pour la topologie Kummer plate (voir \cite[Theorem 3.2]{kato2} ou \cite[Cor. 2.22]{niziol}). Rappelons au passage que $\gmlog$ (not\'e $\gm^{\rm cpt}$ dans et \cite{kato2} et $\gm^{\times}$ dans \cite{niziol}) n'est pas repr\'esentable par un log sch\'ema (voir \cite[2.7 (c)]{illusie}).

Si $X$ est un sch\'ema, nous consid\'ererons \'egalement la cat\'egorie $(Lis/X)$ des $X$-sch\'emas lisses. Cette derni\`ere, munie de la topologie \'etale, et un site que l'on appelle le site lisse (ou lisse-\'etale) de $X$, et que nous noterons $(Lis/X)_{\text{\rm \'et}}$.

Nous rappelons dans l'\'enonc\'e ci-dessous la d\'efinition de la log structure associ\'ee \`a l'ouvert compl\'ementaire d'un diviseur (\`a croisements normaux) sur un sch\'ema r\'egulier, laquelle a la vertu de commuter au changement de base lisse.

\begin{propdfn}
\label{logstrdefs}
Soient $X$ un sch\'ema n\oe{}th\'erien r\'egulier, et $j:U\rightarrow X$ un ouvert dont le compl\'ementaire est un diviseur \`a croisements normaux sur $X$.
\begin{enumerate}
\item[$(1)$] L'inclusion
$$
\mathcal{O}_X\cap j_*\mathcal{O}_U^*\longrightarrow \mathcal{O}_X
$$
induit une log structure fine et satur\'ee sur $X$. On dit qu'elle est d\'efinie par $U$, lequel est l'ouvert de trivialit\'e de cette log structure.
\item[$(2)$] La log structure d\'efinie par $U$ est l'image directe par $j$ de la log structure triviale sur $U$. En particulier, $M_X^{\rm gp}=j_*\mathcal{O}_U^*$.
\item[$(3)$] Soit $f:Y\rightarrow X$ un morphisme lisse. Alors l'image r\'eciproque par $f$ de la log structure d\'efinie par $U$ est la log structure d\'efinie par $f^{-1}(U)$.
\end{enumerate}
\end{propdfn}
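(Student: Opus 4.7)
Le plan consiste \`a se ramener au cas local pour la topologie \'etale sur $X$. Puisque $X$ est r\'egulier et que $D := X \setminus U$ est un diviseur \`a croisements normaux, on peut trouver, \'etale localement autour de tout point de $X$, un syst\`eme r\'egulier de param\`etres $t_1, \ldots, t_d$ tel que $U$ soit d\'efini par $t_1 \cdots t_r \neq 0$ pour un certain $r \leq d$. Les trois assertions \'etant de nature locale pour la topologie \'etale, il suffira de les v\'erifier dans ce cadre.

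Pour l'assertion $(1)$, on notera d'abord que l'inclusion $\alpha : \mathcal{O}_X \cap j_*\mathcal{O}_U^* \hookrightarrow \mathcal{O}_X$ est en fait une log structure (et non seulement une pr\'e-log structure) : une section de la source qui devient inversible dans $\mathcal{O}_X$ l'est \emph{a fortiori} dans $\mathcal{O}_X \cap j_*\mathcal{O}_U^*$, donc $\alpha^{-1}(\mathcal{O}_X^*) \simeq \mathcal{O}_X^*$. Pour le caract\`ere fin et satur\'e, on exhibera \'etale localement une carte $\mathbb{N}^r \to \mathcal{O}_X \cap j_*\mathcal{O}_U^*$ envoyant $e_i$ sur $t_i$ ; le r\'esultat classique de factorisation dans un anneau local r\'egulier fournit alors un isomorphisme $\mathcal{O}_X^* \oplus \mathbb{N}^r \simeq \mathcal{O}_X \cap j_*\mathcal{O}_U^*$, ce qui assure le caract\`ere fs puisque $\mathbb{N}^r$ l'est.

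Pour l'assertion $(2)$, on rappellera que l'image directe par $j$ de la log structure triviale sur $U$ est, par d\'efinition, associ\'ee au produit fibr\'e $j_*\mathcal{O}_U^* \times_{j_*\mathcal{O}_U} \mathcal{O}_X \to \mathcal{O}_X$. Comme $U$ est sch\'ematiquement dense dans $X$ (car $X$ est r\'egulier), le morphisme naturel $\mathcal{O}_X \hookrightarrow j_*\mathcal{O}_U$ est injectif, et ce produit fibr\'e s'identifie \`a $\mathcal{O}_X \cap j_*\mathcal{O}_U^*$ --- qui est pr\'ecis\'ement la log structure de $(1)$. L'\'egalit\'e $M_X^{\rm gp} = j_*\mathcal{O}_U^*$ suit alors du calcul local : \'etale localement, $j_*\mathcal{O}_U^* \simeq \mathcal{O}_X^* \oplus \mathbb{Z}^r$, qui n'est autre que le groupe des fractions totales du mono\"ide $\mathcal{O}_X^* \oplus \mathbb{N}^r$.

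L'assertion $(3)$ constituera l'obstacle principal. L'id\'ee est que $f$ \'etant lisse, $Y$ est lui-m\^eme r\'egulier et le diviseur $f^{-1}(D)$ est \`a croisements normaux sur $Y$ ; mieux, \'etale localement, les fonctions $f^*t_1, \ldots, f^*t_r$ se compl\`etent en un syst\`eme r\'egulier de param\`etres de $Y$ aux points consid\'er\'es --- c'est pr\'ecis\'ement ici qu'intervient la lissit\'e (via la platitude et le fait que les fibres sont g\'eom\'etriquement r\'eguli\`eres). La carte locale $\mathbb{N}^r \to M_X$ construite en $(1)$ se tire alors en arri\`ere en une carte de la log structure sur $Y$ d\'efinie par $f^{-1}(U)$, d'o\`u l'isomorphisme attendu entre $f^*M_X$ et cette derni\`ere. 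Le point d\'elicat est bien l'emploi de la lissit\'e : pour un morphisme seulement plat, le tir\'e en arri\`ere des param\`etres $t_i$ n'engendrerait plus forc\'ement l'id\'eal de $f^{-1}(D)$, et l'assertion tomberait en d\'efaut.
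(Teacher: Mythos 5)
Votre argument est correct. Notez toutefois que le texte ne d\'emontre pas cet \'enonc\'e : il renvoie \`a Kato et Illusie pour le point $(1)$, \`a Kato pour le point $(2)$, et ram\`ene le point $(3)$ au fait que, $X$ \'etant normal et $U$ dense, la formation du faisceau $j_*\mathcal{O}_U^*$ commute au changement de base lisse (SGA~7, expos\'e VIII, 6.8~b) et 6.9). Votre r\'edaction reconstitue pr\'ecis\'ement l'argument standard sous-jacent \`a ces r\'ef\'erences : description \'etale-locale du mono\"ide par un syst\`eme r\'egulier de param\`etres et factorialit\'e des anneaux locaux r\'eguliers (stricts hens\'elis\'es) pour $(1)$ et $(2)$, identification du produit fibr\'e d\'efinissant l'image directe avec $\mathcal{O}_X\cap j_*\mathcal{O}_U^*$ gr\^ace \`a l'injectivit\'e de $\mathcal{O}_X\rightarrow j_*\mathcal{O}_U$. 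Pour $(3)$, votre route diff\`ere l\'eg\`erement de celle du texte : au lieu d'invoquer la commutation de $j_*\mathcal{O}_U^*$ au changement de base lisse, vous tirez en arri\`ere la carte $\mathbb{N}^r\rightarrow M_X$ et utilisez que $f^*t_1,\dots,f^*t_r$ se compl\`etent en un syst\`eme r\'egulier de param\`etres de $Y$ ; les deux arguments sont \'equivalents en substance, le v\^otre ayant l'avantage d'\^etre autonome et de localiser exactement o\`u intervient la lissit\'e (comme le confirme le contre-exemple du trait ramifi\'e donn\'e en remarque dans le texte). Seule imprecision v\'enielle : l'isomorphisme $\mathcal{O}_X^*\oplus\mathbb{N}^r\simeq\mathcal{O}_X\cap j_*\mathcal{O}_U^*$ ne vaut qu'aux points situ\'es sur les $r$ branches \`a la fois ; ailleurs l'exposant $r$ doit \^etre remplac\'e par le nombre de branches passant par le point, ce qui ne change rien \`a la conclusion.
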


\begin{proof}
Voir \cite[1.5 et 2.5]{kato1} ou \cite[1.7]{illusie} pour le point $(1)$. Le fait que la log structure ainsi d\'efinie soit l'image directe de la log structure triviale de $U$ est mentionn\'e dans \cite[1.5]{kato1} et implique (par la propri\'et\'e universelle de $M_X^{\rm gp}$) la relation $M_X^{\rm gp}=j_*M_U^{\rm gp}=j_*\mathcal{O}_U^*$. Le point $(3)$ se r\'esume au fait suivant : sous les hypoth\`eses envisag\'ees ($X$ \'etant normal et $U$ dense dans $X$), la formation du faisceau $j_*\mathcal{O}_U^*$ commute au changement de base lisse (voir \cite[expos\'e VIII, 6.8 b) et 6.9]{gro7}).
\end{proof}

\begin{rmq}
$(i)$ Si $X$ est un trait et $\eta$ son point g\'en\'erique, alors la log structure d\'efinie par $\eta$ s'appelle la log structure canonique.

$(ii)$ Dans le point $(3)$, l'hypoth\`ese de lissit\'e de $f$ est n\'ecessaire. Si l'on consid\`ere un morphisme (fini et plat) de traits $f:X'\rightarrow X$ d'indice de ramification $>1$, alors l'image r\'eciproque par $f$ de la log structure canonique de $X$ n'est pas la log structure canonique de $X'$.
\end{rmq}

Dans la suite de cet article, nous fixons les notations et hypoth\`eses suivantes : $S$ est un sch\'ema  n\oe{}th\'erien r\'egulier, et $D=\sum_{m=1}^r D_m$ est un diviseur \`a croisements normaux \`a multiplicit\'es $1$ sur $S$ (les $D_m$ \'etant des diviseurs irr\'eductibles et r\'eduits). Nous notons $j:U\rightarrow S$ l'ouvert compl\'ementaire de $D$. Dor\'enavant, nous consid\'erons $S$ comme un log sch\'ema, muni de la log structure d\'efinie par $U$.

Nous noterons $\gm$ le groupe multiplicatif sur $S$, et $\mu_n$ le sch\'ema en groupe des racines $n$-i\`emes de l'unit\'e sur $S$.

Si $G$ est un $S$-sch\'ema en groupes, nous noterons \'egalement $G$ le log sch\'ema en groupes obtenu en munissant $G$ de la log structure image r\'eciproque de celle de $S$. Tous les log sch\'emas en groupes consid\'er\'es ici sont obtenus de cette mani\`ere.


\subsection{Images directes sur le site lisse}

Tous les morphismes de sites consid\'er\'es ici donnent lieu \`a des morphismes de topos au sens de \cite[expos\'e IV]{gro4} (en particulier, le foncteur image inverse est exact, et le foncteur image directe est exact \`a gauche, et envoie les injectifs sur des injectifs).

Pour tout morphisme de log sch\'emas $T\rightarrow S$, son pull-back $T_U\rightarrow U$ par $j$ est un morphisme de sch\'emas. De plus, cette op\'eration transforme les recouvrements pour la topologie Kummer plate en recouvrements pour la topologie fppf. On obtient ainsi un morphisme de sites
$$
j:(Sch/U)_{\text{\rm pl}}\longrightarrow (fs/S)_{\text{\rm kpl}}
$$
qui est un morphisme de localisation. Nous noterons d'autre part
$$
\jcl:(Lis/U)_{\text{\rm \'et}}\longrightarrow (Lis/S)_{\text{\rm \'et}}
$$
le morphisme (classique) entre sites lisses.

Suivant les notations de Kato, nous appellerons $\varepsilon$ le morphisme de sites
$$
\begin{CD}
\varepsilon:(fs/S)_{\text{\rm kpl}} @>>> (fs/S)_{\text{\rm pl}}
\end{CD}
$$
qui consiste \`a comparer la topologie Kummer log plate et la topologie fppf classique.

Nous consid\'ererons \'egalement le morphisme de sites
$$
\begin{CD}
q:(fs/S)_{\text{\rm kpl}} @>>> (Lis/S)_{\text{\rm \'et}}
\end{CD}
$$
obtenu en composant les morphismes
$$
\begin{CD}
(fs/S)_{\text{\rm kpl}} @>\varepsilon>> (fs/S)_{\text{\rm pl}} @>m>> (Lis/S)_{\text{\rm \'et}}
\end{CD}
$$
o\`u $m$ est un morphisme de comparaison de la topologie fppf et de la topologie \'etale.

\begin{lem}
\label{fx1}
On a l'\'egalit\'e $q_*\gmlog=(\jcl)_*\gmu$ sur le site lisse de $S$.
\end{lem}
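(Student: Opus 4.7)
L'approche consiste \`a \'evaluer les deux faisceaux sur un $S$-sch\'ema lisse arbitraire $Y$, puis \`a identifier les groupes obtenus gr\^ace \`a la proposition-d\'efinition \ref{logstrdefs}. Notons $f:Y\rightarrow S$ le morphisme structural et $j_Y:Y_U\rightarrow Y$ l'inclusion de l'ouvert $Y_U:=f^{-1}(U)$. D'un c\^ot\'e, la d\'efinition de $(\jcl)_*$ donne imm\'ediatement
$$
(\jcl)_*\gmu(Y) = \gmu(Y_U) = \Gamma(Y_U,\mathcal{O}_{Y_U}^*).
$$
De l'autre, le foncteur de sites $q$ envoie $Y$ sur lui-m\^eme, muni de la log structure image r\'eciproque de celle de $S$, de sorte que
$$
q_*\gmlog(Y) = \gmlog(Y) = \Gamma(Y,M_Y^{\rm gp}).
$$

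Il s'agit donc d'identifier $M_Y^{\rm gp}$ avec $(j_Y)_*\mathcal{O}_{Y_U}^*$. Pour cela, j'appliquerai la proposition-d\'efinition \ref{logstrdefs} au sch\'ema $Y$ : puisque $f$ est lisse et $S$ n\oe{}th\'erien r\'egulier, $Y$ est n\oe{}th\'erien r\'egulier et $f^{-1}(D)$ est un diviseur \`a croisements normaux sur $Y$, de compl\'ementaire $Y_U$. Le point $(3)$ identifie alors la log structure de $Y$ avec celle d\'efinie par $Y_U$, et le point $(2)$ fournit l'\'egalit\'e souhait\'ee $M_Y^{\rm gp}=(j_Y)_*\mathcal{O}_{Y_U}^*$. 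En prenant les sections globales, on obtient $\Gamma(Y,M_Y^{\rm gp})=\Gamma(Y_U,\mathcal{O}_{Y_U}^*)$, ce qui conclut.

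Cette d\'emonstration n'est finalement qu'une transcription, sur le site lisse de $S$, des points $(2)$ et $(3)$ de la proposition-d\'efinition pr\'ec\'edente. Je n'anticipe pas d'obstacle s\'erieux : le seul point \`a v\'erifier est que $Y$ satisfait aux hypoth\`eses de ladite proposition-d\'efinition, ce qui d\'ecoule imm\'ediatement de la lissit\'e de $f$ et de la stabilit\'e des diviseurs \`a croisements normaux par image r\'eciproque par un morphisme lisse.
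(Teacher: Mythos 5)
Votre démonstration est correcte et suit exactement la même voie que l'article, dont la preuve se réduit à la phrase \og{}Découle de la proposition-définition \ref{logstrdefs}\fg{} : vous ne faites qu'expliciter, objet par objet du site lisse, l'usage des points $(2)$ et $(3)$ de cette proposition-définition. Rien à redire.
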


\begin{proof}
D\'ecoule de la proposition-d\'efinition \ref{logstrdefs}.
\end{proof}

Il est bien connu que le morphisme naturel $\gm\rightarrow (\jcl)_*\gmu$ est un monomorphisme sur le site lisse. Nous d\'efinissons alors le faisceau $\underline{D}$ par la suite exacte
\begin{equation}
\label{suite1}
\begin{CD}
0 @>>> \gm @>>> (\jcl)_*\gmu @>>> \underline{D} @>>> 0 \\
\end{CD}
\end{equation}
sur le site lisse de $S$. On constate aussit\^ot que la restriction $\underline{D}_{\text{\rm Zar}}$ du faisceau $\underline{D}$ au (petit) site Zariski de $X$ est donn\'ee par
$$
\underline{D}_{\text{\rm Zar}}=\bigoplus_{m=1}^r \mathbb{Z}_{D_m}
$$
o\`u $D_1,\dots,D_r$ sont les composantes irr\'eductibles de $D$. Si ces derni\`eres sont en outre g\'eom\'etriquement unibranches, alors une telle description de $\underline{D}$ est valable sur le site lisse tout entier, d'apr\`es \cite[expos\'e VIII, 6.9]{gro7}.

\begin{thm}
\label{comparaison}
On dispose d'un isomorphisme canonique
$$
m_*(R^1\varepsilon_* \gm)\simeq \underline{D}\otimes_{\mathbb{Z}} \mathbb{Q}/\mathbb{Z}
$$
entre faisceaux sur le site lisse de $S$.
\end{thm}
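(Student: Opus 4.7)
Le plan consiste à appliquer le foncteur $m_*\varepsilon_*$ à la suite exacte courte ``fondamentale''
$$0 \longrightarrow \gm \longrightarrow \gmlog \longrightarrow \overline{M}_S^{\rm gp} \longrightarrow 0$$
de faisceaux sur $(fs/S)_{\rm kpl}$, où $\overline{M}_S^{\rm gp}:=M_S^{\rm gp}/\mathcal{O}_S^*$. Les foncteurs $\varepsilon_*$ et $m_*$ étant exacts à gauche, et compte tenu de $m_*\varepsilon_*\gm = \gm$ ainsi que de l'identification $m_*\varepsilon_*\gmlog = (\jcl)_*\gmu$ fournie par le lemme \ref{fx1}, la suite exacte longue associée se traduit par une suite exacte sur le site lisse de $S$ :
$$\gm \longrightarrow (\jcl)_*\gmu \longrightarrow m_*\varepsilon_*\overline{M}_S^{\rm gp} \longrightarrow m_*R^1\varepsilon_*\gm \longrightarrow m_*R^1\varepsilon_*\gmlog.$$

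L'\'etape centrale est d'\'etablir l'isomorphisme $m_*\varepsilon_*\overline{M}_S^{\rm gp}\simeq \underline{D}\otimes_{\mathbb{Z}}\mathbb{Q}$. Sur le site \'etale classique, la proposition-d\'efinition \ref{logstrdefs}(2) combin\'ee avec \eqref{suite1} donne $\overline{M}_S^{\rm gp}\simeq\underline{D}$. Dans la topologie Kummer log plate, un recouvrement Kummer $T'\to T$ d'indice $n$ multiplie par $n$ les sections du mono\"ide $\overline{M}^{\rm gp}$ (extraction des racines $n$-i\`emes des \'equations locales des composantes du diviseur). Passant \`a la colimite sur ces recouvrements, on obtient au niveau des fibres g\'eom\'etriques $(\overline{M}^{\rm gp})_{\bar{x}}\otimes_{\mathbb{Z}}\mathbb{Q}$, ce qui conduit \`a l'identification annonc\'ee. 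Parall\`element, il faudra d\'emontrer l'annulation $m_*R^1\varepsilon_*\gmlog = 0$, qui rel\`eve des r\'esultats de Kato (voir \cite{kato2}) : la cohomologie Kummer log plate de $\gmlog$ co\"incide avec la cohomologie fppf classique du faisceau $(\jcl)_*\gmu$, dont les images directes sup\'erieures par $m_*$ s'annulent sur le site lisse.

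Ces deux points \'etablis, la suite exacte pr\'ec\'edente se r\'eduit \`a
$$\gm \longrightarrow (\jcl)_*\gmu \longrightarrow \underline{D}\otimes\mathbb{Q} \longrightarrow m_*R^1\varepsilon_*\gm \longrightarrow 0.$$
Le morphisme central se factorise en $(\jcl)_*\gmu \twoheadrightarrow \underline{D}\hookrightarrow \underline{D}\otimes\mathbb{Q}$, la premi\`ere fl\`eche \'etant donn\'ee par \eqref{suite1} et la seconde \'etant injective car $\underline{D}$ est sans torsion (on se ram\`ene localement aux composantes unibranches via les strictes hens\'elis\'ees). Le conoyau s'identifie donc \`a $\underline{D}\otimes_{\mathbb{Z}}(\mathbb{Q}/\mathbb{Z})$, ce qui fournit l'isomorphisme souhait\'e. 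La difficult\'e principale r\'eside dans les deux ingr\'edients techniques ci-dessus : la ``divisibilit\'e'' de $\overline{M}^{\rm gp}$ en topologie Kummer log plate et l'annulation de $m_*R^1\varepsilon_*\gmlog$.
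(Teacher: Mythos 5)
Votre architecture est viable mais elle d\'eplace toute la difficult\'e vers une \'etape que vous ne d\'emontrez pas, alors que la preuve du texte l'\'evite enti\`erement : le th\'eor\`eme \ref{comparaison} y est une cons\'equence en deux lignes de \cite[Theorem 4.1]{kato2} (ou \cite[Theorem 3.12]{niziol}), qui fournit directement $R^1\varepsilon_*\gm\simeq\varinjlim\Hom(\mu_n,\gm)\otimes_{\mathbb{Z}}(\gmlog/\gm)$, le quotient \'etant pris pour la topologie fppf classique ; il ne reste alors qu'\`a identifier ce quotient \`a $\underline{D}$ sur le site lisse (lemme \ref{fx1} et suite \eqref{suite1}) et $\varinjlim\Hom(\mu_n,\gm)$ \`a $\mathbb{Q}/\mathbb{Z}$. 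Votre \og{}\'etape centrale\fg{}, \`a savoir $m_*\varepsilon_*\overline{M}_S^{\rm gp}\simeq\underline{D}\otimes_{\mathbb{Z}}\mathbb{Q}$, est pr\'ecis\'ement l'\'enonc\'e du lemme \ref{fx7} du texte, lequel y est \emph{d\'eduit} du th\'eor\`eme \ref{comparaison} (via le corollaire \ref{fx4}) : vous ne pouvez donc pas l'invoquer tel quel sans circularit\'e, et votre argument de rechange est incomplet. Le raisonnement \og{}un rev\^etement de Kummer d'indice $n$ multiplie les sections par $n$, donc la colimite donne $\otimes\,\mathbb{Q}$\fg{} \'etablit au mieux l'inclusion $\underline{D}\otimes\mathbb{Q}\subseteq q_*(\gmlog/\gm)^{\text{\rm kpl}}$ (ce que le texte obtient plus simplement par le lemme \ref{unidiv}) ; l'inclusion inverse exige de savoir que tout recouvrement Kummer log plat d'un localis\'e strict est raffin\'e par un rev\^etement de Kummer standard compos\'e avec un recouvrement fppf classique. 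C'est exactement le contenu dur du th\'eor\`eme de comparaison de Kato : en voulant le contourner, vous \^etes conduit \`a le red\'emontrer.

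Deux points techniques suppl\'ementaires. D'une part, la suite exacte longue que vous \'ecrivez m\'elange $m_*R^i\varepsilon_*$ et $R^i(m_*\circ\varepsilon_*)=R^iq_*$ : la suite correcte fait intervenir $R^1q_*\gm$ et $R^1q_*\gmlog$, et l'identification $R^1q_*\gm=m_*(R^1\varepsilon_*\gm)$ n'est pas gratuite (elle repose sur l'annulation $R^im_*\gm=0$ pour $i>0$, cf. \cite[Appendice]{gb3}, et fait l'objet d'une proposition s\'epar\'ee du texte) ; pour $\gmlog$, non repr\'esentable, c'est bien $R^1q_*\gmlog=0$ (lemme \ref{fx5}, d\^u \`a Kato) qu'il faut utiliser, et non $m_*R^1\varepsilon_*\gmlog=0$. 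D'autre part, la fin de votre argument (factorisation par $\underline{D}$ et calcul du conoyau $\underline{D}\otimes\mathbb{Q}/\mathbb{Z}$) est correcte une fois les deux ingr\'edients acquis. En l'\'etat, la proposition n'est donc pas une preuve : elle rep\`ere correctement o\`u g\^it la difficult\'e, mais la laisse ouverte l\`a o\`u le texte la r\'esout en citant Kato.
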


\begin{proof}
D'apr\`es \cite[Theorem 4.1]{kato2} ou \cite[Theorem 3.12]{niziol}, nous avons un isomorphisme canonique
$$
R^1\varepsilon_*\gm \simeq \lim_{\longrightarrow} \Hom(\mu_n,\gm)\otimes_{\mathbb{Z}} (\gmlog/\gm)
$$
o\`u le faisceau quotient $(\gmlog/\gm)$ est calcul\'e pour la topologie fppf (ou la topologie \'etale, ce qui revient ici au m\^eme). En vertu du lemme \ref{fx1} et de la suite \eqref{suite1}, la restriction de ce quotient au site lisse n'est autre que $\underline{D}$. Par ailleurs, la limite inductive des $\Hom(\mu_n,\gm)$ est le faisceau constant $\mathbb{Q}/\mathbb{Z}$, d'o\`u le r\'esultat.
\end{proof}

La proposition qui suit d\'ecoule de \cite[Appendice]{gb3}.

\begin{prop}
Soit $G$ un $S$-sch\'ema en groupes lisse. Alors $R^1q_* G=m_*(R^1\varepsilon_* G)$.
\end{prop}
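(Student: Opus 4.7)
Le plan est d'exploiter la factorisation $q=m\circ\varepsilon$ en appliquant la suite spectrale de Leray associ\'ee,
$$E_2^{p,q}=R^p m_*\bigl(R^q\varepsilon_* G\bigr)\Longrightarrow R^{p+q} q_* G,$$
dont la suite exacte \`a cinq termes de bas degr\'e s'\'ecrit
$$0\to R^1 m_*(\varepsilon_* G)\to R^1 q_* G\to m_*\bigl(R^1\varepsilon_* G\bigr)\to R^2 m_*(\varepsilon_* G)\to R^2 q_* G.$$
Il suffira donc d'\'etablir que $R^i m_*(\varepsilon_* G)=0$ pour $i=1,2$ afin d'en d\'eduire que le morphisme d'ar\^ete $R^1 q_* G\to m_*(R^1\varepsilon_* G)$ est un isomorphisme.

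La premi\`ere \'etape consiste \`a identifier $\varepsilon_* G$. Comme $G$ (muni de la log structure image r\'eciproque de celle de $S$) est repr\'esent\'e par un sch\'ema, son foncteur des points est un faisceau aussi bien pour la topologie Kummer log plate que pour la topologie fppf classique sur $(fs/S)$ ; on a donc simplement $\varepsilon_* G=G$, et il s'agit de v\'erifier l'annulation des $R^i m_* G$ pour $i=1,2$.

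La seconde \'etape, qui constitue le c\oe{}ur de la preuve, consiste \`a invoquer le r\'esultat de \cite[Appendice]{gb3}, lequel affirme que pour un $S$-sch\'ema en groupes commutatif lisse $G$, les images directes sup\'erieures $R^i m_* G$ s'annulent pour $i\geq 1$. Il s'agit en substance de la compatibilit\'e classique des cohomologies fppf et \'etale pour un groupe lisse, adapt\'ee au fait que la source de $m$ est la cat\'egorie des log sch\'emas fs (la compatibilit\'e des log structures au changement de base lisse, cf. proposition-d\'efinition \ref{logstrdefs}(3), rend cette extension purement formelle). En injectant ces annulations dans la suite exacte \`a cinq termes ci-dessus, on obtient l'isomorphisme voulu. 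Le principal obstacle est ainsi enti\`erement import\'e par la r\'ef\'erence cit\'ee ; la d\'emonstration proprement dite se r\'eduit \`a un maniement formel de la suite spectrale de Leray.
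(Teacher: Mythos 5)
Votre démonstration est correcte et suit essentiellement la même voie que celle de l'article : factorisation $q_*=m_*\circ\varepsilon_*$, suite spectrale du foncteur composé, et annulation des $R^im_*G$ pour $i>0$ tirée de \cite[Appendice, th\'eor\`eme 11.7]{gb3} (l'article remarque de même que $\varepsilon_*G=G$, sous la forme $q_*G=G$). La seule différence est cosmétique : là où vous invoquez la compatibilité des log structures au changement de base lisse, l'article se contente de remplacer le gros site $(fs/S)_{\text{\rm pl}}$ par le petit site $(Sch/S)_{\text{\rm pl}}$ pour se ramener à la comparaison classique fppf/étale.
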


\begin{proof}
D'apr\`es \cite[Appendice, th\'eor\`eme 11.7]{gb3}, sous les hypoth\`eses envisag\'ees, $R^im_*G=0$ pour tout $i>0$. En effet, on peut remplacer le gros site $(fs/S)_{\text{\rm pl}}$ par le petit site $(Sch/S)_{\text{\rm pl}}$ et on se retrouve dans une situation de comparaison de la cohomologie fppf et de la cohomologie \'etale. Il ne reste plus qu'\`a d\'eriver le foncteur compos\'e $q_*=m_*\circ\varepsilon_*$ (en remarquant au passage que $q_*G=G$).
\end{proof}

\begin{cor}
\label{fx4}
On dispose d'un isomorphisme canonique
$$
R^1q_* \gm\simeq \underline{D}\otimes_{\mathbb{Z}} \mathbb{Q}/\mathbb{Z}
$$
entre faisceaux sur le site lisse de $S$.
\end{cor}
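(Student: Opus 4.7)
The plan is essentially to concatenate the two preceding results. The multiplicative group $\gm$ is a smooth commutative $S$-group scheme, so the hypotheses of the preceding proposition are satisfied with $G=\gm$. Applying it yields the identification
$$
R^1 q_* \gm \;=\; m_*(R^1 \varepsilon_* \gm)
$$
as sheaves on $(Lis/S)_{\text{\rm \'et}}$. The preceding theorem \ref{comparaison} then identifies the right-hand side with $\underline{D}\otimes_{\mathbb{Z}}\mathbb{Q}/\mathbb{Z}$, giving the claimed isomorphism.

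There is essentially no obstacle here beyond checking that the two inputs combine cleanly: the factorisation $q = m\circ \varepsilon$ of sites was set up precisely so that $q_* = m_* \circ \varepsilon_*$, and the vanishing $R^i m_* \gm = 0$ for $i>0$ used in the proposition is exactly what allows the degenerate Leray spectral sequence for $q_* = m_*\circ\varepsilon_*$ applied to $\gm$ to reduce to the edge map $R^1q_*\gm \simeq m_*R^1\varepsilon_*\gm$. So the corollary is merely the composition of these two isomorphisms, and no further argument (or calculation involving $\underline{D}$, Kummer theory, or the log structure) is needed at this stage, all the substantive work having already been done.
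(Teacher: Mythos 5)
Your proof is correct and matches the paper exactly: the corollary is stated without further argument precisely because it is the immediate concatenation of the proposition $R^1q_*G=m_*(R^1\varepsilon_*G)$ applied to $G=\gm$ with the isomorphism of Theorem \ref{comparaison}. Your remark on the degeneration of the Leray spectral sequence via $R^im_*\gm=0$ is exactly the content of the proposition's proof, so nothing is missing.
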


\begin{lem}
\label{fx5}
Avec les notations pr\'ec\'edentes, $R^1q_*\gmlog=0$.
\end{lem}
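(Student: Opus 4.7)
The plan is to show that $R^1q_*\gmlog$ has vanishing stalks on $(Lis/S)_{\text{\'et}}$. Since this site has enough points, it suffices to check that $H^1_{\text{kpl}}(T,\gmlog) = 0$ for every strict henselization $T = \Spec(\mathcal{O}^{sh}_{Y,\bar{y}})$ of a smooth $S$-scheme at a geometric point, endowed with the pulled-back log structure. Such a $T$ is strictly henselian, regular (since $Y$ is smooth over the regular $S$), and its log structure is defined by a normal crossings divisor (proposition-d\'efinition \ref{logstrdefs}~(3)); in particular $T$ is log regular.

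For such a $T$, I compute $H^1_{\text{kpl}}(T,\gmlog)$ via the long exact sequence attached to the Kummer log flat short exact sequence
\[
0\longrightarrow\gm\longrightarrow\gmlog\longrightarrow\overline{M}\longrightarrow 0,
\]
where $\overline{M}$ denotes the cokernel computed in the kpl topology. The key input is Kato's theorem (\cite{kato2}) asserting that the Kummer sequence $0\to\mu_n\to\gmlog\xrightarrow{n}\gmlog\to 0$ is exact in kpl for every $n$; this forces $\overline{M}$ to be uniquely divisible, so $\overline{M}\simeq\overline{M}^{\mathrm{cl}}\otimes_\mathbb{Z}\mathbb{Q}$, where $\overline{M}^{\mathrm{cl}}$ is the classical \'etale quotient $\gmlog/\gm$ (restricting to $\underline{D}$ on the smooth site of $S$). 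Combined with the stalk of th\'eor\`eme \ref{comparaison}, which gives $H^1_{\text{kpl}}(T,\gm) = \overline{M}^{\mathrm{cl}}(T)\otimes\mathbb{Q}/\mathbb{Z}$, the relevant portion of the long exact sequence becomes
\[
\gmlog(T) \longrightarrow \overline{M}^{\mathrm{cl}}(T)\otimes\mathbb{Q} \stackrel{\partial}{\longrightarrow} \overline{M}^{\mathrm{cl}}(T)\otimes\mathbb{Q}/\mathbb{Z} \longrightarrow H^1_{\text{kpl}}(T,\gmlog) \longrightarrow H^1_{\text{kpl}}(T,\overline{M}).
\]
A Kummer-theoretic inspection shows that $\partial$ is the natural surjection (a rational divisor $\tfrac{d}{n}$ lifts kpl-locally to a section $f\in\gmlog$ with $f^n\in\gm$, yielding the $\mu_n$-torsor classified by $\tfrac{d}{n}\bmod\mathbb{Z}$); hence $H^1_{\text{kpl}}(T,\gmlog)$ embeds in $H^1_{\text{kpl}}(T,\overline{M})$.

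The main obstacle is to establish the vanishing of $H^1_{\text{kpl}}(T,\overline{M})$. Writing $\overline{M}=\varinjlim_n\tfrac{1}{n}\overline{M}^{\mathrm{cl}}$ and commuting $H^1_{\text{kpl}}$ with the filtered colimit reduces the problem to $H^1_{\text{kpl}}(T,\overline{M}^{\mathrm{cl}})\otimes\mathbb{Q}=0$. Since $\overline{M}^{\mathrm{cl}}$ is \'etale-locally a finitely generated free $\mathbb{Z}$-module, this further reduces to $H^1_{\text{kpl}}(T,\mathbb{Z})\otimes\mathbb{Q}=0$; the latter holds because $H^1_{\text{kpl}}(T,\mathbb{Z}) = \homr(\pi_1^{\mathrm{kpl}}(T),\mathbb{Z})$ vanishes, the Kummer log flat fundamental group being pro-finite while the target $\mathbb{Z}$ is discrete and torsion-free.
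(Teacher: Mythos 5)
Your reduction to stalks is exactly the paper's first (and only) step: the vanishing of $R^1q_*\gmlog$ is translated into $H^1_{\text{kpl}}(\overline{S},\gmlog)=0$ for strict localizations, and the paper then simply cites Kato (\cite[Corollary 5.2]{kato2}, or \cite[Corollary 3.21]{niziol}), which states precisely this vanishing. Everything after your first paragraph is therefore an attempt to reprove that cited result from the comparison theorem, and this is where the difficulties lie.

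The main gap is the identification $\overline{M}\simeq\overline{M}^{\mathrm{cl}}\otimes_{\mathbb{Z}}\mathbb{Q}$, equivalently $\overline{M}=\varinjlim_n \tfrac{1}{n}\overline{M}^{\mathrm{cl}}$, on which both the surjectivity of $\partial$ and the computation of $H^1_{\text{kpl}}(T,\overline{M})$ rest. Unique divisibility of $\overline{M}$ (lemme \ref{unidiv}) only produces a canonical map $\overline{M}^{\mathrm{cl}}\otimes\mathbb{Q}\to\overline{M}$; that this map is an isomorphism of kpl sheaves is a genuine assertion about the Kummer topology (it is essentially the sheaf-level version of lemme \ref{fx7}), and in the paper the corresponding statement is deduced \emph{from} lemme \ref{fx5}, so you cannot borrow it without circularity. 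The identification of $\partial$ with the reduction map $\overline{M}^{\mathrm{cl}}(T)\otimes\mathbb{Q}\to\overline{M}^{\mathrm{cl}}(T)\otimes\mathbb{Q}/\mathbb{Z}$ by ``Kummer-theoretic inspection'' likewise presupposes a compatibility between Kato's isomorphism of \cite[Theorem 4.1]{kato2} and the boundary map of the sequence $0\to\gm\to\gmlog\to\overline{M}\to0$; this compatibility is the real content of the step and is not checked. Finally, the last reduction needs both the commutation of $H^1_{\text{kpl}}(T,-)$ with filtered colimits on the big kpl site and the identification $H^1_{\text{kpl}}(T,\mathbb{Z})=\homr(\pi_1^{\text{kpl}}(T),\mathbb{Z})$ (i.e.\ that every kpl $\mathbb{Z}$-torsor is split by a Kummer cover); both are plausible but are themselves pieces of Kato's theory rather than formal facts. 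In short, the statement you set out to prove in your last three paragraphs is exactly the result of Kato that the paper invokes in one line, and your d\'evissage does not yet constitute an independent proof of it.
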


\begin{proof}
On peut traduire l'\'egalit\'e $R^1q_*\gmlog=0$ de la fa\c{c}on suivante : pour tout localis\'e strict $\overline{S}$ de $S$, on a
$$
H^1_{\text{\rm kpl}}(\overline{S},\gmlog)=0
$$
Ce r\'esultat a \'et\'e montr\'e par Kato (voir \cite[Corollary 5.2]{kato2} ou \cite[Corollary 3.21]{niziol}).
\end{proof}

\begin{prop}
\label{h1gmlog}
Dans le diagramme commutatif
$$
\begin{CD}
H^1_{\text{\rm \'et}}(S,(\jcl)_*\gmu) @>q^*>> H^1_{\text{\rm kpl}}(S,\gmlog) \\
@V\jcl^*VV @VVj^*V \\
H^1_{\text{\rm \'et}}(U,\gmu) @>>> H^1_{\text{\rm pl}}(U,\gmu) \\
\end{CD}
$$
toutes les fl\`eches sont des isomorphismes.
\end{prop}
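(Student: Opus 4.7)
Le plan consiste à prouver que trois des quatre flèches sont des isomorphismes via des suites spectrales de Leray, la quatrième en découlant par commutativité. La flèche horizontale inférieure est d'emblée un isomorphisme: les deux groupes $H^1_{\text{\rm \'et}}(U,\gmu)$ et $H^1_{\text{\rm pl}}(U,\gmu)$ calculent classiquement $\pic(U)$, la cohomologie étale et la cohomologie fppf coïncidant pour un schéma en groupes commutatifs lisse, et le site lisse donnant les mêmes $H^1$ que le petit site étale pour un tel faisceau.

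Pour la flèche supérieure $q^*$, on écrit la suite exacte à cinq termes associée à la suite spectrale de Leray pour le morphisme de sites $q$ et le faisceau $\gmlog$. Le lemme \ref{fx1} identifie $q_*\gmlog$ à $(\jcl)_*\gmu$, tandis que le lemme \ref{fx5} donne $R^1q_*\gmlog=0$; il s'ensuit immédiatement que $q^*$ est un isomorphisme.

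On procède de façon similaire pour la flèche verticale de gauche, via la suite spectrale de Leray pour $\jcl$ appliquée à $\gmu$. Il suffit de vérifier l'annulation de $R^1(\jcl)_*\gmu$ sur $(Lis/S)_{\text{\rm \'et}}$. Ce faisceau est le faisceautisé du préfaisceau $T\mapsto\pic(T\times_S U)$ sur les $S$-schémas lisses, dont la fibre en un point géométrique d'un tel $T$ s'identifie à $\pic(\bar T\times_S U)$, où $\bar T$ désigne l'hensélisé strict correspondant. Comme $T$ est régulier (étant lisse sur $S$ régulier), $\bar T$ est un anneau local régulier strictement hensélien, donc factoriel et de groupe des classes trivial; par ailleurs, d'après la proposition-définition \ref{logstrdefs}~(3), $\bar T\times_S U$ est le complément d'un diviseur à croisements normaux dans $\bar T$, ce qui force à son tour son groupe des classes à être nul, soit $\pic(\bar T\times_S U)=0$. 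On en déduit $R^1(\jcl)_*\gmu=0$, d'où l'isomorphie de $\jcl^*$; la flèche $j^*$ est alors un isomorphisme par commutativité du diagramme.

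Le seul point de vigilance réel réside dans l'identification de la fibre de $R^1(\jcl)_*\gmu$ au Picard ci-dessus et dans son annulation, vérification élémentaire compte tenu de la régularité de $S$ et du travail préliminaire effectué par les lemmes \ref{fx1} et \ref{fx5}.
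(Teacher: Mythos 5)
Votre démonstration est correcte et suit essentiellement la même voie que celle de l'article : $q^*$ est un isomorphisme par la suite exacte de Leray grâce aux lemmes \ref{fx1} et \ref{fx5}, $\jcl^*$ l'est par l'annulation de $R^1(\jcl)_*\gmu$ déduite de la factorialité des anneaux locaux (stricts hensélisés) du schéma régulier $S$, la flèche du bas par la comparaison étale/fppf pour les groupes lisses, et $j^*$ par commutativité. Vous explicitez simplement le \og{}raisonnement standard\fg{} que l'article laisse implicite pour $R^1(\jcl)_*\gmu$.
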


\begin{proof}
La fl\`eche $q^*$ est un isomorphisme car $R^1q_*\gmlog=0$ d'apr\`es le lemme \ref{fx5}. La fl\`eche $\jcl^*$ est un isomorphisme car, le sch\'ema $S$ \'etant r\'egulier, ses anneaux locaux sont factoriels et, par un raisonnement standard, $R^1(\jcl)_*\gmu=0$ en cohomologie \'etale. Enfin, la fl\`eche horizontale du bas est un isomorphisme d'apr\`es \cite[Appendice, th\'eor\`eme 11.7]{gb3}. On en d\'eduit que $j^*$ est un isomorphisme.
\end{proof}


\subsection{Un faisceau quotient}

D'apr\`es \cite[Proposition 4.2]{kato2} on dispose, pour tout entier $n\geq 1$, d'une suite exacte pour la topologie Kummer plate (dite suite exacte de Kummer)
\begin{equation}
\label{kummerlog}
\begin{CD}
0 @>>> \mu_n @>>> \gmlog @>>> \gmlog @>>> 0. \\
\end{CD}
\end{equation}
Dans le cas o\`u $n$ est inversible sur $S$, cette suite est exacte pour la topologie Kummer \'etale (voir \cite[2.7 (d)]{illusie}).

Nous noterons $(\gmlog/\gm)^{\text{\rm kpl}}$ le quotient de $\gmlog$ par $\gm$ dans la cat\'egorie des faisceaux ab\'eliens sur $(fs/S)_{\text{\rm kpl}}$.

\begin{lem}
\label{unidiv}
Pour tout entier $n\geq 1$, la multiplication par $n$ est un automorphisme du faisceau $(\gmlog/\gm)^{\rm kpl}$.
\end{lem}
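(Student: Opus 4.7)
The strategy is to apply the snake lemma to the diagram obtained by multiplying by $n$ the defining short exact sequence of the quotient, exploiting the fact that $n$ is surjective with the same kernel on both $\gm$ and $\gmlog$ for the Kummer log flat topology.

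More precisely, I would consider the short exact sequence of $\text{kpl}$-sheaves
\begin{equation*}
0\longrightarrow \gm \longrightarrow \gmlog \longrightarrow (\gmlog/\gm)^{\rm kpl} \longrightarrow 0
\end{equation*}
and the commutative diagram obtained by applying multiplication by $n$ on each term:
\begin{equation*}
\begin{CD}
0 @>>> \gm @>>> \gmlog @>>> (\gmlog/\gm)^{\rm kpl} @>>> 0 \\
@. @VnVV @VnVV @VnVV \\
0 @>>> \gm @>>> \gmlog @>>> (\gmlog/\gm)^{\rm kpl} @>>> 0. \\
\end{CD}
\end{equation*}
By the classical Kummer exact sequence for the fppf topology (a fortiori for the kpl topology), multiplication by $n$ on $\gm$ is surjective with kernel $\mu_n$. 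By the log Kummer sequence \eqref{kummerlog}, multiplication by $n$ on $\gmlog$ is also surjective in the kpl topology, with the same kernel $\mu_n$. Moreover, the natural map between these two kernels induced by the inclusion $\gm\hookrightarrow \gmlog$ is simply the identity on $\mu_n$, since $\mu_n$ is contained in $\gm$.

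Applying the snake lemma then yields the exact sequence
\begin{equation*}
0\longrightarrow \mu_n \overset{\sim}{\longrightarrow} \mu_n\longrightarrow \ker\bigl(n|_{(\gmlog/\gm)^{\rm kpl}}\bigr) \longrightarrow 0 \longrightarrow 0 \longrightarrow \mathrm{coker}\bigl(n|_{(\gmlog/\gm)^{\rm kpl}}\bigr)\longrightarrow 0,
\end{equation*}
from which both the kernel and the cokernel of multiplication by $n$ on $(\gmlog/\gm)^{\rm kpl}$ vanish; this gives the desired result. There is no real obstacle here: the whole argument is diagrammatic, and the only nontrivial input is the log Kummer sequence \eqref{kummerlog}, which has already been recalled.
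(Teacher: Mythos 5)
Your proof is correct and takes essentially the same route as the paper: the paper likewise compares the Kummer sequences for $\gm$ and $\gmlog$ (both exact for the Kummer log flat topology by Kato's Proposition 4.2, with the same kernel $\mu_n$) and concludes by the nine lemma, which is the same diagram chase you perform with the snake lemma.
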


\begin{proof}
On compare les suites exactes de Kummer (en cohomologie Kummer log plate) donn\'ees par la multiplication par $n$ sur les faisceaux $\gm$ et $\gmlog$ (pour l'exactitude de ces suites, on consultera \cite[Proposition 4.2]{kato2}). Comme les deux noyaux sont \'egaux (au groupe $\mu_n$), on en d\'eduit le r\'esultat par le lemme des neuf.
\end{proof}

Soit la suite exacte sur le site Kummer plat
\begin{equation}
\label{suite2}
\begin{CD}
0 @>>> \gm @>>> \gmlog @>>> (\gmlog/\gm)^{\text{\rm kpl}} @>>> 0 \\
\end{CD}
\end{equation}
en prenant l'image directe par $q$ on obtient sur le site lisse la suite
$$
0 \longrightarrow \gm \longrightarrow j_*\gmu \longrightarrow q_*(\gmlog/\gm)^{\text{\rm kpl}} \longrightarrow R^1q_*\gm \longrightarrow R^1q_*\gmlog
$$
dans laquelle $R^1q_*\gmlog=0$ d'apr\`es le lemme \ref{fx5}. D'autre part, le quotient $\gm/j_*\gmu$ sur le site lisse est  $\underline{D}$. On en d\'eduit une suite exacte courte
$$
\begin{CD}
0 @>>> \underline{D} @>>> q_*(\gmlog/\gm)^{\text{\rm kpl}} @>>> R^1q_*\gm @>>> 0 \\
\end{CD}
$$
que nous allons expliciter un peu plus.

\begin{lem}
\label{fx7}
Avec les notations pr\'ec\'edentes, nous avons
$$
q_*(\gmlog/\gm)^{\text{\rm kpl}}=\underline{D}\otimes_{\mathbb{Z}} \mathbb{Q}
$$
sur le site lisse de $S$.
\end{lem}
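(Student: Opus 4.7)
Le plan consiste à exploiter la divisibilité unique du faisceau $(\gmlog/\gm)^{\rm kpl}$ établie au lemme \ref{unidiv}, en la combinant avec la suite exacte courte
$$
0 \longrightarrow \underline{D} \longrightarrow q_*(\gmlog/\gm)^{\text{\rm kpl}} \longrightarrow R^1q_*\gm \longrightarrow 0
$$
obtenue juste avant l'énoncé, et avec l'identification $R^1q_*\gm \simeq \underline{D}\otimes_{\mathbb{Z}}\mathbb{Q}/\mathbb{Z}$ fournie par le corollaire \ref{fx4}.

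Je commencerais par remarquer que tout foncteur, en particulier $q_*$, envoie les isomorphismes sur des isomorphismes. Appliqué au lemme \ref{unidiv}, ceci entraîne que la multiplication par tout entier $n\geq 1$ induit un automorphisme du faisceau $q_*(\gmlog/\gm)^{\rm kpl}$ sur le site lisse. Autrement dit, ce faisceau est uniquement divisible, et donc est naturellement un faisceau de $\mathbb{Q}$-espaces vectoriels.

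Ensuite, je tensoriserais la suite exacte courte ci-dessus par $\mathbb{Q}$ au-dessus de $\mathbb{Z}$. Par platitude de $\mathbb{Q}$, l'exactitude est préservée ; le terme du milieu reste inchangé, puisque tout faisceau de $\mathbb{Q}$-espaces vectoriels $F$ vérifie $F\otimes_{\mathbb{Z}}\mathbb{Q}=F$ ; et le terme de droite $\underline{D}\otimes_{\mathbb{Z}}\mathbb{Q}/\mathbb{Z}$, qui est de torsion, devient nul. On en déduit que l'application naturelle $\underline{D}\otimes_{\mathbb{Z}}\mathbb{Q} \to q_*(\gmlog/\gm)^{\rm kpl}$, obtenue en étendant par $\mathbb{Q}$-linéarité l'inclusion $\underline{D}\hookrightarrow q_*(\gmlog/\gm)^{\rm kpl}$, est un isomorphisme, ce qui est l'énoncé recherché.

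Aucun obstacle sérieux n'est à prévoir : l'argument repose entièrement sur la divisibilité unique du lemme \ref{unidiv}, laquelle se transmet sans peine du site Kummer log plat au site lisse via le foncteur $q_*$, puis sur une manipulation formelle de suite exacte tensorisée par $\mathbb{Q}$. La seule vérification véritablement à effectuer est celle de la préservation par $q_*$ de la propriété « multiplication par $n$ est un isomorphisme », mais celle-ci est immédiate.
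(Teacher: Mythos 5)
Votre démonstration est correcte et suit essentiellement la même voie que celle de l'article : divisibilité unique de $(\gmlog/\gm)^{\text{\rm kpl}}$ (lemme \ref{unidiv}) transmise à son image par $q_*$, puis comparaison avec $\underline{D}$ via la suite exacte courte et le corollaire \ref{fx4}. La seule différence est de présentation : vous tensorisez toute la suite par $\mathbb{Q}$, là où l'article factorise l'injection $\underline{D}\hookrightarrow q_*(\gmlog/\gm)^{\text{\rm kpl}}$ à travers $\underline{D}\otimes_{\mathbb{Z}}\mathbb{Q}$ et identifie le quotient $\underline{D}\otimes_{\mathbb{Z}}\mathbb{Q}/\mathbb{Z}$ à $R^1q_*\gm$ ; les deux formulations sont équivalentes.
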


\begin{proof}
D'apr\`es le lemme \ref{unidiv}, le faisceau $(\gmlog/\gm)^{\text{\rm kpl}}$ est uniquement divisible. Il en est donc de m\^eme de son image par $q_*$. Mais cette derni\`ere contient $\underline{D}$, donc contient forc\'ement $\underline{D}\otimes_{\mathbb{Z}} \mathbb{Q}$ (qui est en quelque sorte l'enveloppe uniquement divisible de $\underline{D}$). Plus exactement, l'injection canonique $\underline{D} \rightarrow q_*(\gmlog/\gm)^{\text{\rm kpl}}$ se factorise
$$
\begin{CD}
\underline{D} @>>> \underline{D}\otimes_{\mathbb{Z}} \mathbb{Q} @>h>>  q_*(\gmlog/\gm)^{\text{\rm kpl}} \\
\end{CD}
$$
On en d\'eduit un monomorphisme canonique de faisceaux
$$
\underline{D}\otimes_{\mathbb{Z}} \mathbb{Q}/\mathbb{Z}\longrightarrow R^1q_*\gm
$$
qui est un isomorphisme d'apr\`es le corollaire \ref{fx4}. On en d\'eduit ais\'ement que $h$ est un isomorphisme.
\end{proof}

\begin{rmq}
\label{logetalefailure}
On peut aussi consid\'erer le quotient $(\gmlog/\gm)^{\text{\rm k\'et}}$ dans la cat\'egorie des faisceaux ab\'eliens sur $(fs/S)_{\text{\rm k\'et}}$. La multiplication par $n$ est un automorphisme de ce faisceau si et seulement si $n$ est inversible sur $S$. On peut alors transposer en cohomologie log \'etale les r\'esultats des paragraphes 2 et 3 en rempla\c{c}ant $\mathbb{Q}$ par le sous-anneau de $\mathbb{Q}$ engendr\'e par les inverses des entiers inversibles sur $S$.
\end{rmq}


\section{Fibr\'es en droites logarithmiques}
\label{fibreslog}


\subsection{Diviseurs \`a coefficients rationnels}

Le sch\'ema $S$ \'etant r\'egulier, le groupe $\Div(S)$ des diviseurs de Cartier sur $S$ est isomorphe au groupe  $\mathcal{Z}^1(S)$ des cycles de codimension $1$ de $S$ (voir \cite[21.6.9]{ega4}). Nous comettrons un abus de langage en identifiant ces deux groupes. D'autre part, nous noterons $\Divp(S)$ le sous-groupe des diviseurs principaux (que nous identifierons au sous-groupe des cycles principaux). Sous les hypoth\`eses envisag\'ees, il est bien connu que le groupe $\pic(S)=H^1_{\text{\rm pl}}(S,\gm)$ est isomorphe \`a $\Div(S)/\Divp(S)$. Notre but est de donner ici une description analogue du groupe $H^1_{\text{\rm kpl}}(S,\gm)$.

\begin{dfn}
On appelle groupe des diviseurs \`a coefficients rationnels au-dessus de $D$, et l'on note $\DivRat(S,D)$, le sous-groupe de $\Div(S)\otimes_{\mathbb{Z}} \mathbb{Q}$ constitu\'e des diviseurs dont la restriction \`a $U$ est \`a coefficients entiers.
\end{dfn}

\begin{rmq}
Il semblerait l\'egitime d'appeler $\DivRat(S,D)$ le groupe des diviseurs logarithmiques sur $S$. On v\'erifie ais\'ement que
$$
\DivRat(S,D)=\Div(U)\oplus\bigoplus_{m=1}^r \mathbb{Q}.D_m
$$
\end{rmq}

\begin{thm}
\label{logpic}
On dispose d'un isomorphisme canonique
$$
\begin{CD}
\DivRat(S,D)/\Divp(S) @>\sim>> H^1_{\text{\rm kpl}}(S,\gm) \\
\end{CD}
$$
\end{thm}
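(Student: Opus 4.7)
Mon plan est de d\'eduire l'isomorphisme de la suite exacte longue de cohomologie Kummer log plate associ\'ee \`a \eqref{suite2}. En utilisant les r\'esultats de la section pr\'ec\'edente, on calcule les termes pertinents. D'apr\`es la proposition-d\'efinition \ref{logstrdefs}, on a $H^0_{\text{\rm kpl}}(S, \gmlog) = \Gamma(S, M_S^{\rm gp}) = \Gamma(U, \mathcal{O}_U^*)$. Le lemme \ref{fx7} donne $q_*(\gmlog/\gm)^{\text{\rm kpl}} = \underline{D} \otimes_{\mathbb{Z}} \mathbb{Q}$ sur le site lisse, et puisque les $D_m$ sont r\'eguliers (donc g\'eom\'etriquement unibranches) et irr\'eductibles (donc connexes), on dispose de $\underline{D} = \bigoplus_m (i_m)_*\mathbb{Z}$ sur le site lisse, d'o\`u $H^0_{\text{\rm kpl}}(S, (\gmlog/\gm)^{\text{\rm kpl}}) = \bigoplus_m \mathbb{Q}\, D_m$. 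Enfin, la proposition \ref{h1gmlog} fournit $H^1_{\text{\rm kpl}}(S, \gmlog) = \pic(U)$.

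La suite exacte longue prend alors la forme
$$ \Gamma(S, \mathcal{O}_S^*) \to \Gamma(U, \mathcal{O}_U^*) \xrightarrow{v} \bigoplus_m \mathbb{Q}\, D_m \xrightarrow{\partial} H^1_{\text{\rm kpl}}(S, \gm) \to \pic(U) \to \cdots $$
o\`u $v$ est le morphisme de valuation classique, \`a valeurs dans le sous-groupe $\bigoplus_m \mathbb{Z}\, D_m$. La fl\`eche $H^1_{\text{\rm kpl}}(S, \gm) \to \pic(U)$ est surjective: en effet, elle s'identifie \`a la restriction \`a $U$, et la compos\'ee $\pic(S) \to H^1_{\text{\rm kpl}}(S, \gm) \to \pic(U)$ coincide avec la restriction classique $\pic(S) \to \pic(U)$, laquelle est surjective puisque $S$ est r\'egulier. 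On en d\'eduit la suite exacte courte
$$ 0 \to \bigoplus_m \mathbb{Q}\, D_m / v(\Gamma(U, \mathcal{O}_U^*)) \to H^1_{\text{\rm kpl}}(S, \gm) \to \pic(U) \to 0. $$
Parall\`element, la d\'ecomposition $\DivRat(S,D) = \Div(U) \oplus \bigoplus_m \mathbb{Q}\, D_m$ et une analyse directe de $\Divp(S)\subset \Div(S) \subset \DivRat(S,D)$ (dont la projection sur $\Div(U)$ est $\Divp(U)$, et dont l'intersection avec $\bigoplus_m \mathbb{Q}\, D_m$ est exactement l'image $v(\Gamma(U,\mathcal{O}_U^*))$) donnent une suite exacte analogue pour $\DivRat(S,D)/\Divp(S)$.

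Pour conclure, je construis un morphisme canonique $\DivRat(S,D) \to H^1_{\text{\rm kpl}}(S, \gm)$ en posant: pour $D'_U \in \Div(U)$, la classe $[\mathcal{O}_S(D'_U)] \in \pic(S) \to H^1_{\text{\rm kpl}}(S, \gm)$; pour $\alpha \in \bigoplus_m \mathbb{Q}\, D_m$, le bord $\partial(\alpha)$. La compatibilit\'e des morphismes de bord des suites \eqref{suite1} et \eqref{suite2} fournit l'identit\'e $\partial(\beta) = [\mathcal{O}_S(\beta)]$ dans $\pic(S)$ pour $\beta \in \bigoplus_m \mathbb{Z}\, D_m$, ce qui entra\^ine que le morphisme pr\'ec\'edent s'annule sur $\Divp(S)$ (par d\'ecomposition $\diviseur(f) = \diviseur(f|_U) + \sum v_{D_m}(f) D_m$) et induit un morphisme de suites exactes courtes dont les fl\`eches extr\^emes sont les identit\'es. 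Le lemme des cinq conclut. L'obstacle technique principal est pr\'ecis\'ement cette v\'erification de la compatibilit\'e des bords, qui traduit le fait que \eqref{suite2} \og{}\'etend\fg{} la suite \eqref{suite1} en permettant des coefficients rationnels le long des composantes de $D$.
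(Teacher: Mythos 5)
Your proposal is correct and takes essentially the same route as the paper: both rest on the same inputs (lemme \ref{fx7}, proposition \ref{h1gmlog}, and the comparison of the cohomology sequences attached to \eqref{suite1} et \eqref{suite2}), the only difference being the final bookkeeping — the paper identifies $H^1_{\text{\rm kpl}}(S,\gm)$ as a somme amalgam\'ee via two cocartesian squares, whereas you extract two short exact sequences over $\pic(U)$ and apply the five lemma to an explicitly constructed map $\DivRat(S,D)/\Divp(S)\to H^1_{\text{\rm kpl}}(S,\gm)$. The boundary compatibility you single out as the main technical point is exactly the commutativity of the paper's diagram, which it asserts without further detail; note also that for your computation of $\Gamma(S,(\gmlog/\gm)^{\text{\rm kpl}})$ the unibranch hypothesis on the $D_m$ is unnecessary, since only the restriction of $\underline{D}$ to the small Zariski site of $S$ is involved.
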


\begin{proof}
En vertu du lemme \ref{fx7}, on peut \'ecrire
$$
\Gamma(S,(\gmlog/\gm)^{\text{\rm kpl}})=\bigoplus_{m=1}^r \mathbb{Q}.D_m
$$
et d'autre part on sait que $\Gamma(S,\gmlog)=\Gamma(U,\gm)$. Ainsi, la suite exacte \eqref{suite2} de faisceaux pour la topologie Kummer log plate donne lieu \`a la suite exacte du bas dans le diagramme commutatif ci-dessous, la suite du haut provenant quant \`a elle de la suite \eqref{suite1} de faisceaux \'etales sur le site lisse.
$$
\begin{CD}
\Gamma(U,\gm) @>>> \bigoplus_{m=1}^r \mathbb{Z}.D_m @>>> \pic(S) @>>> \pic(U) @. \rightarrow 0 \\
@| @VVV @VVV @VVV \\
\Gamma(U,\gm) @>>> \bigoplus_{m=1}^r \mathbb{Q}.D_m @>>> H^1_{\text{\rm kpl}}(S,\gm) @>>> H^1_{\text{\rm kpl}}(S,\gmlog) @. \\
\end{CD}
$$
On sait enfin (proposition \ref{h1gmlog}) que la fl\`eche verticale de droite est un isomorphisme. Par cons\'equent, la fl\`eche en bas \`a droite est \'egalement surjective. On en d\'eduit que le carr\'e du milieu est cocart\'esien, c'est-\`a-dire que $H^1_{\text{\rm kpl}}(S,\gm)$ est isomorphe \`a la somme amalgam\'ee des deux fl\`eches provenant de $\oplus_{m=1}^r \mathbb{Z}.D_m$.

D'autre part on v\'erifie ais\'ement que le carr\'e
$$
\begin{CD}
\bigoplus_{m=1}^r \mathbb{Z}.D_m @>>> \Div(S)/\Divp(S) \\
@VVV @VVV \\
\bigoplus_{m=1}^r \mathbb{Q}.D_m @>>> \DivRat(S,D)/\Divp(S) \\
\end{CD}
$$
est cocart\'esien. Comme la partie sup\'erieure gauche de ce carr\'e est isomorphe \`a celle du carr\'e pr\'ec\'edent, on en d\'eduit que les deux carr\'es sont canoniquement isomorphes, d'o\`u le r\'esultat.
\end{proof}

\begin{cor}
\label{spectralpic}
On dispose d'une suite exacte
$$
\begin{CD}
0 @>>> \pic(S) @>>> H^1_{\text{\rm kpl}}(S,\gm) @>\nu>> \bigoplus_{m=1}^r (\mathbb{Q}/\mathbb{Z}).D_m @>>> 0 \\
\end{CD}
$$
D'autre part, le morphisme naturel
$$
\xi:H^2_{\text{\rm pl}}(S,\gm) \longrightarrow H^2_{\text{\rm kpl}}(S,\gm)
$$
est injectif.
\end{cor}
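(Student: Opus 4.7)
The plan is to derive (1) as a purely algebraic consequence of Theorem \ref{logpic}, and then to obtain the injectivity of $\xi$ from the low-degree exact sequence of the Leray spectral sequence for $\varepsilon$.

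For part (1), I would start from the tautological short exact sequence of abelian groups
\begin{equation*}
0 \longrightarrow \Div(S) \longrightarrow \DivRat(S,D) \longrightarrow \bigoplus_{m=1}^r (\mathbb{Q}/\mathbb{Z}).D_m \longrightarrow 0,
\end{equation*}
which is immediate from the decompositions $\Div(S)=\Div(U)\oplus\bigoplus_m \mathbb{Z}.D_m$ and $\DivRat(S,D) = \Div(U) \oplus \bigoplus_m \mathbb{Q}.D_m$. Since $\Divp(S)$ is contained in $\Div(S)$, quotienting term-by-term would yield
\begin{equation*}
0 \longrightarrow \pic(S) \longrightarrow \DivRat(S,D)/\Divp(S) \longrightarrow \bigoplus_{m=1}^r (\mathbb{Q}/\mathbb{Z}).D_m \longrightarrow 0,
\end{equation*}
and the identification of the middle group with $H^1_{\text{\rm kpl}}(S, \gm)$ provided by Theorem \ref{logpic} would then give the desired short exact sequence, with $\nu$ being the induced projection.

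For the injectivity of $\xi$, the idea is to invoke the Leray spectral sequence
\begin{equation*}
E_2^{p,q} = H^p_{\text{\rm pl}}(S, R^q\varepsilon_* \gm) \Longrightarrow H^{p+q}_{\text{\rm kpl}}(S, \gm).
\end{equation*}
Since $\gm$ is representable (and so is already a sheaf for the Kummer log flat topology), one has $\varepsilon_* \gm = \gm$, so the associated five-term exact sequence takes the form
\begin{equation*}
0 \to \pic(S) \to H^1_{\text{\rm kpl}}(S, \gm) \to H^0_{\text{\rm pl}}(S, R^1\varepsilon_* \gm) \to H^2_{\text{\rm pl}}(S, \gm) \xrightarrow{\xi} H^2_{\text{\rm kpl}}(S, \gm).
\end{equation*}
Using Theorem \ref{comparaison}, I would identify $H^0_{\text{\rm pl}}(S, R^1\varepsilon_* \gm)$ with the Zariski global sections of $\underline{D}\otimes\mathbb{Q}/\mathbb{Z}$, namely $\bigoplus_m (\mathbb{Q}/\mathbb{Z}).D_m$ (each $D_m$ being irreducible, hence connected). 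The edge map would then coincide with $\nu$ by naturality, and since $\nu$ is surjective by part (1), exactness would force the subsequent map $\bigoplus_m (\mathbb{Q}/\mathbb{Z}).D_m \to H^2_{\text{\rm pl}}(S, \gm)$ to vanish, proving $\ker \xi = 0$.

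The only non-mechanical ingredient is the naturality check identifying the spectral sequence edge map with the map $\nu$ coming from Theorem \ref{logpic}; since both are ultimately induced from the short exact sequence $0 \to \gm \to \gmlog \to (\gmlog/\gm)^{\text{\rm kpl}} \to 0$ that underlies all of Section \ref{sorites}, this comparison should present no real obstacle.
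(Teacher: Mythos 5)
Your proposal is correct and follows essentially the same route as the paper: the five-term exact sequence of the Leray spectral sequence for $\varepsilon$ (with $\varepsilon_*\gm=\gm$ and $R^1\varepsilon_*\gm$ computed by Theorem \ref{comparaison}), the identification of the edge map $\nu$ with the natural projection $\DivRat(S,D)/\Divp(S)\rightarrow\bigoplus_m(\mathbb{Q}/\mathbb{Z}).D_m$ via Theorem \ref{logpic}, and the surjectivity of that projection to kill the term mapping to $H^2_{\text{\rm pl}}(S,\gm)$. The only difference is organizational: you establish the short exact sequence of part (1) algebraically first and then feed its surjectivity into the spectral sequence, whereas the paper reads both conclusions off the four-term sequence at once; the substance is identical.
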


\begin{proof}
On dispose d'une suite exacte (d\'eduite de suite spectrale)
$$
\begin{CD}
0 \longrightarrow \pic(S) \longrightarrow H^1_{\text{\rm kpl}}(S,\gm) @>\nu>> (\underline{D}\otimes_{\mathbb{Z}} (\mathbb{Q}/\mathbb{Z}))(S) \longrightarrow  \ker(\xi) \longrightarrow 0\\
\end{CD}
$$
D'apr\`es le th\'eor\`eme \ref{logpic}, la fl\`eche $\nu$ s'identifie \`a la fl\`eche naturelle
$$
\begin{CD}
\DivRat(S,D)/\Divp(S) @>>> \bigoplus_{m=1}^r (\mathbb{Q}/\mathbb{Z}).D_m \\
\end{CD}
$$
d\'eduite par passage au quotient de la projection naturelle
$$
\DivRat(S,D)\longrightarrow \bigoplus_{m=1}^r \mathbb{Q}.D_m
$$
On en d\'eduit que $\nu$ est surjective. Par cons\'equent, $\ker(\xi)=0$.
\end{proof}

\begin{rmq}
Si $N\in\DivRat(S,D)$ est un diviseur \`a coefficients rationnels au-dessus de $D$, nous noterons $[N]$ sa classe modulo $\Divp(S)$. Remarquons au passage un ph\'enom\`ene qui pourrait pr\^eter \`a confusion : supposons que $D_{\alpha}$ soit une composante irr\'eductible de $D$ telle que $[D_{\alpha}]=0$, alors pour tout $n\in\mathbb{N}^*$, la classe $[\frac{1}{n}D_{\alpha}]$ est un \'el\'ement d'ordre exactement $n$ dans $\DivRat(S,D)/\Divp(S)$. En d'autres termes, le symbole $[-]$ n'est pas $\mathbb{Q}$-lin\'eaire (il est en revanche $\mathbb{Z}$-lin\'eaire).
\end{rmq}


\subsection{Th\'eorie de Kummer}

Si l'on restreint \`a $U$ la suite de Kummer (en cohomologie log plate) pour le faisceau $\gmlog$ (\`a l'aide du foncteur de localisation $j^*$), on trouve la suite exacte de Kummer usuelle pour la topologie fppf sur $U$.

Soit $n\geq 1$ un entier. Par passage \`a la cohomologie dans la suite de Kummer log plate \eqref{kummerlog} (resp. dans la suite de Kummer classique sur $U$) on obtient la premi\`ere (resp. deuxi\`eme) ligne du diagramme suivant
\begin{equation}
\label{compsuitesKummer}
\begin{CD}
0 @>>> \gmlog(S)/n @>>> H^1_{\text{\rm kpl}}(S,\mu_n) @>>> H^1_{\text{\rm kpl}}(S, \gmlog)[n] @>>> 0 \\
@. @VVV @VVV @VVV \\
0 @>>> \gm(U)/n @>>> H^1_{\text{\rm pl}}(U,\mu_n)@>>> \pic(U)[n] @>>> 0 \\
\end{CD}
\end{equation}
dans laquelle les fl\`eches verticales sont obtenues par restriction \`a $U$.

\begin{prop}
\label{munlog}
Pour tout entier $n\geq 1$, le morphisme de restriction
$$
j^*:H^1_{\text{\rm kpl}}(S,\mu_n)\longrightarrow H^1_{\text{\rm pl}}(U,\mu_n)
$$
est un isomorphisme.
\end{prop}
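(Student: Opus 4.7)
The plan is simply to apply the five lemma to the commutative diagram \eqref{compsuitesKummer}, whose two rows are the Kummer short exact sequences (top: log plate on $S$, bottom: fppf on $U$). It therefore suffices to show that the left and right vertical maps are isomorphisms; the middle map will then automatically be an isomorphism as well.

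For the left-hand vertical map $\gmlog(S)/n \to \gm(U)/n$, I would invoke part $(2)$ of the proposition-définition \ref{logstrdefs}, which gives $M_S^{\mathrm{gp}} = j_*\mathcal{O}_U^*$. Taking global sections yields $\gmlog(S) = \Gamma(S, M_S^{\mathrm{gp}}) = \Gamma(U, \mathcal{O}_U^*) = \gm(U)$, and the restriction map is in fact the identity at the level of groups, hence induces an isomorphism modulo $n$.

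For the right-hand vertical map $H^1_{\text{\rm kpl}}(S, \gmlog)[n] \to \pic(U)[n]$, I would invoke Proposition \ref{h1gmlog}, which states that the restriction $j^*: H^1_{\text{\rm kpl}}(S, \gmlog) \to H^1_{\text{\rm pl}}(U, \gmu) = \pic(U)$ is itself an isomorphism; taking $n$-torsion on each side preserves this.

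The five lemma (or short five lemma) applied to the resulting commutative diagram with exact rows then yields the claim. There is no real obstacle: once the quotient description $M_S^{\mathrm{gp}} = j_*\mathcal{O}_U^*$ and the comparison result \ref{h1gmlog} for $\gmlog$-cohomology are in hand, the proposition is a purely formal diagram chase.
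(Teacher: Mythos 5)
Votre démonstration est correcte et suit exactement la même démarche que celle du papier : examen du diagramme \eqref{compsuitesKummer}, isomorphisme de la flèche verticale de gauche par définition de $\gmlog$ (via $M_S^{\rm gp}=j_*\mathcal{O}_U^*$), isomorphisme de celle de droite par la proposition \ref{h1gmlog}, puis conclusion par le lemme des cinq. Rien à redire.
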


\begin{proof}
Examinons le diagramme \eqref{compsuitesKummer}. Par d\'efinition de $\gmlog$, la fl\`eche verticale de gauche est un isomorphisme. D'apr\`es la proposition \ref{h1gmlog}, la fl\`eche verticale de droite est \'egalement un isomorphisme. On en d\'eduit que la fl\`eche verticale du milieu est un isomorphisme.
\end{proof}

\begin{prop}
\label{spectralkummer}
Pour tout entier $n\geq 1$, on dispose d'une suite exacte
$$
\begin{CD}
0 \rightarrow H^1_{\text{\rm pl}}(S,\mu_n)\rightarrow H^1_{\text{\rm kpl}}(S,\mu_n) @>\nu_n>> \bigoplus_{m=1}^r (\frac{1}{n}\mathbb{Z}/\mathbb{Z}).D_m
@>\theta_n>> \pic(S)/n \rightarrow \pic(U)/n \\
\end{CD}
$$
o\`u la fl\`eche $\theta_n$ est l'application naturelle qui, pour tout $m$, envoie $\frac{1}{n}.D_m$ sur la classe de $[D_m]$ modulo $n\pic(S)$.
\end{prop}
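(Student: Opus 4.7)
The strategy is to obtain the sequence as a combination of three inputs: Corollary \ref{spectralpic} (which resolves $H^1_{\text{kpl}}(S,\gm)$ in terms of $\pic(S)$ and $\bigoplus_m (\mathbb{Q}/\mathbb{Z}).D_m$), the Kummer sequences $0 \to \mu_n \to \gm \xrightarrow{n} \gm \to 0$ in both the fppf and the Kummer log flat topologies, and the classical excision sequence $\bigoplus_m \mathbb{Z}.D_m \to \pic(S) \to \pic(U) \to 0$.

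First I would apply the snake lemma to the multiplication-by-$n$ endomorphism of the short exact sequence of Corollary \ref{spectralpic}. Since $\bigoplus_m (\mathbb{Q}/\mathbb{Z}).D_m$ is divisible with $n$-torsion equal to $\bigoplus_m (\tfrac{1}{n}\mathbb{Z}/\mathbb{Z}).D_m$, this yields a 5-term exact sequence
\begin{equation*}
0 \to \pic(S)[n] \to H^1_{\text{kpl}}(S,\gm)[n] \to \bigoplus_{m=1}^r (\tfrac{1}{n}\mathbb{Z}/\mathbb{Z}).D_m \xrightarrow{\theta_n} \pic(S)/n \to H^1_{\text{kpl}}(S,\gm)/n \to 0,
\end{equation*}
and the connecting map sends $\tfrac{1}{n}.D_m$ to the class of $[D_m]$ mod $n$, exactly as required.

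Next I would form the commutative ladder coming from the Kummer sequences of $\gm$ in the pl and kpl topologies,
\begin{equation*}
\begin{CD}
0 @>>> \gm(S)/n @>>> H^1_{\text{pl}}(S,\mu_n) @>>> \pic(S)[n] @>>> 0 \\
@. @| @VVV @VVV \\
0 @>>> \gm(S)/n @>>> H^1_{\text{kpl}}(S,\mu_n) @>>> H^1_{\text{kpl}}(S,\gm)[n] @>>> 0,
\end{CD}
\end{equation*}
where the left vertical map is the identity because $\gm$ is representable so $\varepsilon_*\gm=\gm$. A snake-lemma argument (or direct diagram chase, using the injectivity of $\pic(S)[n] \to H^1_{\text{kpl}}(S,\gm)[n]$ from the previous step) then yields an exact sequence
\begin{equation*}
0 \to H^1_{\text{pl}}(S,\mu_n) \to H^1_{\text{kpl}}(S,\mu_n) \to H^1_{\text{kpl}}(S,\gm)[n]/\pic(S)[n] \to 0,
\end{equation*}
and by the first step the quotient on the right is canonically $\ker(\theta_n)$, giving the exactness at $H^1_{\text{kpl}}(S,\mu_n)$ and at $\bigoplus_m (\tfrac{1}{n}\mathbb{Z}/\mathbb{Z}).D_m$.

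Finally, to splice in the last term I would invoke the well-known excision sequence $\bigoplus_m \mathbb{Z}.D_m \to \pic(S) \to \pic(U) \to 0$: reducing modulo $n$ shows that the kernel of $\pic(S)/n \to \pic(U)/n$ is precisely the image of $\bigoplus_m (\mathbb{Z}/n).D_m$ under $D_m \mapsto [D_m]$, which under the identification $\mathbb{Z}/n \simeq \tfrac{1}{n}\mathbb{Z}/\mathbb{Z}$ coincides with the image of $\theta_n$. None of the steps is genuinely hard --- the only mildly delicate point is verifying that the connecting map produced by the snake lemma on Corollary \ref{spectralpic} is literally the map $\theta_n$ described in the statement, which is a direct computation with the definition of the boundary map.
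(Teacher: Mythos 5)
Your proof is correct, but it takes a genuinely different route from the paper's. The paper obtains the sequence directly as the low-degree exact sequence of the Leray spectral sequence for $\varepsilon$ applied to $\mu_n$, using Kato's computation $m_*(R^1\varepsilon_*\mu_n)\simeq \underline{D}\otimes_{\mathbb{Z}}\frac{1}{n}\mathbb{Z}/\mathbb{Z}$; the delicate point there is identifying the last term $\ker(\xi_n)$, which the paper handles by comparing the degree-two Kummer ladders for $\gm$ in the fppf and Kummer log flat topologies (so that $\ker(\xi_n)=\ker(\rho_n)$, $\xi$ being injective by Corollaire \ref{spectralpic}) and then computing $\DivRat(S,D)/n=\Div(U)/n$ explicitly. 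You instead bootstrap everything from the already-established sequence for $\gm$ (Corollaire \ref{spectralpic}) via two applications of the snake lemma --- once to multiplication by $n$ on that sequence, once to the $H^1$-level Kummer ladders --- and splice in $\pic(U)/n$ with the excision sequence. This avoids both the appeal to Kato's theorem for $\mu_n$ and any degree-two cohomology, and it makes the identification of the connecting map with $\theta_n$ transparent (lift $\frac{1}{n}.D_m$ to $[\frac{1}{n}D_m]\in\DivRat(S,D)/\Divp(S)$ and multiply by $n$). The one thing your argument does not deliver for free is that your $\nu_n$ (the composite through $H^1_{\text{\rm kpl}}(S,\gm)[n]$) coincides with the edge map of the Leray spectral sequence, which is the map implicitly used later in the remarque \ref{nthpower} when attaching $\nu_n(T)$ to a log flat $\mu_n$-torsor $T$; this does follow from the functoriality of that spectral sequence along $\mu_n\to\gm$, but deserves a sentence. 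For the proposition as stated, which only pins down $\theta_n$, your proof is complete.
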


\begin{proof}
Le groupe $\mu_n$ \'etant fini et plat sur $S$, nous avons d'apr\`es Kato (voir \cite[Theorem 4.1]{kato2} ou \cite[Theorem 3.12]{niziol}) un isomorphisme canonique
$$
m_*(R^1\varepsilon_* \mu_n) \simeq \underline{D}\otimes_{\mathbb{Z}} \frac{1}{n}\mathbb{Z}/\mathbb{Z}
$$
semblable \`a celui du th\'eor\`eme \ref{comparaison}. La suite exacte de l'\'enonc\'e est une \'emanation de la suite spectrale
$$
0 \longrightarrow H^1_{\text{\rm pl}}(S,\mu_n)\longrightarrow H^1_{\text{\rm kpl}}(S,\mu_n)\longrightarrow (\underline{D}\otimes_{\mathbb{Z}} \frac{1}{n}\mathbb{Z}/\mathbb{Z})(S) \longrightarrow \ker(\xi_n)
$$
o\`u
$$
\xi_n:H^2_{\text{\rm pl}}(S,\mu_n) \longrightarrow H^2_{\text{\rm kpl}}(S,\mu_n)
$$
est le morphisme naturel. Consid\'erons le diagramme commutatif, \`a lignes exactes, dans lequel la premi\`ere (resp. deuxi\`eme) ligne provient de la suite exacte de Kummer pour $\gm$ en cohomologie fppf (resp. Kummer log plate).
$$
\begin{CD}
0 @>>> H^1_{\text{\rm pl}}(S,\gm)/n @>>> H^2_{\text{\rm pl}}(S,\mu_n) @>>> H^2_{\text{\rm pl}}(S, \gm)[n] @>>> 0 \\
@. @V\rho_n VV @V\xi_n VV @V\xi VV \\
0 @>>> H^1_{\text{\rm kpl}}(S,\gm)/n @>>> H^2_{\text{\rm kpl}}(S,\mu_n) @>>> H^2_{\text{\rm kpl}}(S, \gm)[n] @>>> 0 \\
\end{CD}
$$
D'apr\`es le corollaire \ref{spectralpic}, on sait que $\xi$ est injectif. On en d\'eduit que $\ker(\xi_n)=\ker(\rho_n)$, o\`u $\rho_n$ est le morphisme induit par le morphisme naturel. En vertu du th\'eor\`eme \ref{logpic}, $\rho_n$ s'identifie au morphisme naturel
$$
\rho'_n:(\Div(S)/\Divp(S))/n\longrightarrow (\DivRat(S,D)/\Divp(S))/n
$$
qui se r\'e\'ecrit
$$
\rho'_n:(\Div(S)/n)/H_n\longrightarrow (\DivRat(S,D)/n)/H_n
$$
o\`u $H_n$ est l'image de $\Divp(S)$ dans $\Div(S)/n$. D'autre part, nous avons
$$
n\DivRat(S,D)=n\Div(U)\oplus\bigoplus_{m=1}^r \mathbb{Q}.D_m
$$
d'o\`u
$$
\DivRat(S,D)/n=\Div(U)/n
$$
et
$$
(\DivRat(S,D)/n)/H_n=\pic(U)/n
$$
Au final, $\rho'_n$ est l'application naturelle
$$
\pic(S)/n\longrightarrow \pic(U)/n
$$
ce qui permet de conclure.
\end{proof}

\begin{rmq}
\label{nthpower}
Soit $\Omega$ un diviseur \`a coefficients dans $\frac{1}{n}\mathbb{Z}/\mathbb{Z}$, que nous noterons
$$
\Omega=\sum_{m=1}^r \frac{\omega_m}{n}.D_m
$$
les $\omega_m$ \'etant des \'el\'ements de $\mathbb{Z}/n\mathbb{Z}$. Si les $k_m$ sont des repr\'esentants entiers des $\omega_m$, nous dirons que le diviseur
$$
M:=\sum_{m=1}^r k_m.D_m
$$
est un $n$-rel\`evement de $\Omega$. Avec les notations de la proposition \ref{spectralkummer}, $\theta_n(\Omega)$ est alors \'egal \`a la classe de $[M]$ modulo $n\pic(S)$.

D'apr\`es la proposition \ref{spectralkummer}, \'etant donn\'e un $\mu_n$-torseur log plat $T\rightarrow S$, on peut lui associer un diviseur $\nu_n(T)$ \`a coefficients dans $\frac{1}{n}\mathbb{Z}/\mathbb{Z}$. En outre, $\theta_n(\nu_n(T))=0$ ce qui signifie que tout $n$-rel\`evement de $\nu_n(T)$ est une puissance $n$-i\`eme dans $\pic(S)$. Ceci nous pr\^ete \`a croire, de fa\c{c}on un peu imag\'ee, que le torseur $T$ est obtenu par extraction d'une racine $n$-i\`eme de ce rel\`evement. Nous pr\'ecisons ce point de vue dans \cite{gil5} en montrant que les $\mu_n$-torseurs log plats correspondent aux rev\^etements cycliques uniformes au sens de \cite{av}.
\end{rmq}


\section{Applications aux vari\'et\'es ab\'eliennes}
\label{applications}


\subsection{Prolongement d'extensions et de biextensions}

Dans ce paragraphe, nous consid\'erons le cas particulier suivant : $S$ est un trait, de point ferm\'e $s=\Spec(k)$ et de point g\'en\'erique $\eta=\Spec(K)$. Nous noterons, comme d'habitude,  $i:s\rightarrow S$ et $j:\eta\rightarrow S$ les immersions canoniques. Nous prenons $D$ \'egal au point ferm\'e de $S$, de sorte que $S$ se retrouve muni de sa log structure canonique.

Dans ce contexte, la suite \eqref{suite1} se r\'ecrit
\begin{equation}
\label{suite1bis}
\begin{CD}
0 @>>> \gm @>>> (\jcl)_*\gmeta @>\lambda>> i_*\mathbb{Z}_k @>>> 0 \\
\end{CD}
\end{equation}
la fl\`eche $\lambda$ \'etant induite par la valuation.

Le lecteur attentif remarquera le parall\`ele volontaire entre l'\'enonc\'e ci-dessous et celui de \cite[expos\'e VIII, th\'eor\`eme 7.1]{gro7}.

\begin{thm}
\label{sga7style}
\begin{enumerate}
\item[$(i)$] Soit $P$ un sch\'ema en groupes commutatif lisse sur $S$. Supposons que le groupe des composantes de la fibre sp\'eciale de $P$,
$$
\Phi:=P_s/P_s^0
$$
soit un groupe de torsion. Alors le morphisme de restriction \`a la fibre g\'en\'erique
$$
j^*:\ext^1_{\text{\rm kpl}}(P,\gm)\longrightarrow \ext^1_{\text{\rm pl}}(P_{\eta},\gmeta)
$$
est injectif. Si de plus il existe un entier $m>0$ tel que $m\Phi=0$ (ce qui est le cas en particulier si $P$ est de type fini), alors $j^*$ est un isomorphisme.
\item[$(ii)$] Soient $P$ et $Q$ deux sch\'emas en groupes commutatifs lisses sur $S$, posons
$$
\Phi:=P_s/P_s^0, \qquad \Psi:=Q_s/Q_s^0
$$
et supposons que $\Phi$ ou $\Psi$ soit un groupe de torsion. Alors le morphisme de restriction \`a la fibre g\'en\'erique
$$
j^*:\biext^1_{\text{\rm kpl}}(P,Q;\gm)\longrightarrow \biext^1_{\text{\rm pl}}(P_{\eta},Q_{\eta};\gmeta)
$$
est injectif. Si de plus il existe un entier $m>0$ tel que $m\Phi=0$ ou $m\Psi=0$, alors $j^*$ est un isomorphisme.
\end{enumerate}
\end{thm}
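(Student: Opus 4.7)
The plan rests on the kpl exact sequence
$$
0 \longrightarrow \gm \longrightarrow \gmlog \longrightarrow \mathcal{C} \longrightarrow 0, \qquad \mathcal{C} := (\gmlog/\gm)^{\text{\rm kpl}},
$$
in which $\mathcal{C}$ is uniquely divisible (Lemma \ref{unidiv}) and, in the trait setting, supported at the special fiber (its pullback to $\eta$ vanishes since the log structure of $S$ is trivial there). I would first identify $\ext^1_{\text{\rm kpl}}(P, \gmlog)$ with $\ext^1_{\text{\rm pl}}(P_\eta, \gmeta)$: this is a global Ext-variant of Proposition \ref{h1gmlog}, proved by applying the Leray spectral sequence for the localization $j : (\text{Sch}/\eta)_{\text{\rm pl}} \to (fs/S)_{\text{\rm kpl}}$ together with the vanishing $R^i j_* \gmeta = 0$ for $i \geq 1$ (which comes from Lemma \ref{fx5} and log Hilbert 90). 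Combining this with the long exact sequence for $R\homr(P,-)$ applied to the displayed exact sequence yields the four-term sequence
$$
\homr(P, \mathcal{C}) \longrightarrow \ext^1_{\text{\rm kpl}}(P, \gm) \xrightarrow{j^*} \ext^1_{\text{\rm pl}}(P_\eta, \gmeta) \longrightarrow \ext^1_{\text{\rm kpl}}(P, \mathcal{C}),
$$
so that the theorem reduces to two vanishing statements for $\mathcal{C}$.

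For part $(i)$, injectivity under the assumption that $\Phi$ is torsion follows from $\homr_{\text{\rm kpl}}(P, \mathcal{C}) = 0$: since $\mathcal{C}$ is supported at $s$, any morphism $P \to \mathcal{C}$ factors through $P_s \to \mathcal{C}|_s$, a sheaf of $\mathbb{Q}$-vector spaces; the smooth connected group $P_s^0$ admits no nontrivial morphism to a constant-like $\mathbb{Q}$-vector space sheaf, while the torsion component group $\Phi$ admits no nonzero morphism to the torsion-free $\mathcal{C}|_s$. Surjectivity under $m\Phi = 0$ follows from $\ext^1_{\text{\rm kpl}}(P, \mathcal{C}) = 0$: from the exact sequence $0 \to P_s^0 \to P_s \to \Phi \to 0$, one reduces to $\ext^1(\Phi, \mathcal{C}|_s) = 0$ (because $\Phi$ is bounded $m$-torsion while $\mathcal{C}|_s$ is $m$-divisible, so $\ext^1(\mathbb{Z}/m, \mathcal{C}|_s) = \mathcal{C}|_s / m \mathcal{C}|_s = 0$) and $\ext^1(P_s^0, \mathcal{C}|_s) = 0$ (by the divisibility of the smooth connected commutative group $P_s^0$ combined with the $\mathbb{Q}$-linearity of the target).

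For part $(ii)$, the same argument runs with biextensions in place of extensions: the Grothendieck-Breen formalism produces an analogous four-term exact sequence
$$
\biext^0(P, Q; \mathcal{C}) \longrightarrow \biext^1_{\text{\rm kpl}}(P, Q; \gm) \xrightarrow{j^*} \biext^1_{\text{\rm pl}}(P_\eta, Q_\eta; \gmeta) \longrightarrow \biext^1_{\text{\rm kpl}}(P, Q; \mathcal{C}),
$$
and the torsion (resp. bounded torsion) hypothesis on $\Phi$ or $\Psi$ annihilates the outer terms by the same bi-additive-into-a-$\mathbb{Q}$-vector-space-on-$s$ argument applied on the special fiber. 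The principal obstacle is the vanishing $\ext^1_{\text{\rm kpl}}(P, \mathcal{C}) = 0$ and its biextension analogue: although $\mathcal{C}$ is uniquely divisible, it is not representable by a log scheme, so this vanishing cannot be reduced to a standard algebraic-group computation and requires carefully combining the special-fiber support of $\mathcal{C}$, its $\mathbb{Q}$-linearity, and the decomposition of $P_s$ (resp. of $P_s$ and $Q_s$) into connected and discrete parts.
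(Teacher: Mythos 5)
Your overall architecture --- the sequence $0\to\gm\to\gmlog\to\mathcal{C}\to 0$, the identification $\ext^1_{\text{\rm kpl}}(P,\gmlog)\simeq\ext^1_{\text{\rm pl}}(P_\eta,\gmeta)$, and the vanishing $\homr(P,\mathcal{C})=0$ for injectivity --- is the paper's, and the injectivity half is essentially correct (the paper proves $\homr(P,\mathcal{C})=0$ by pushing down to the smooth site, where $q_*\mathcal{C}=i_*\mathbb{Q}_k$ by Lemma \ref{fx7}, just as you sketch). The gap is in surjectivity. After splitting off $i_*\Phi$ (where your multiplication-by-$m$ argument is fine and is exactly the paper's), you need $\ext^1_{\text{\rm kpl}}(P^0,\mathcal{C})=0$, and you justify it by ``divisibility of the smooth connected group $P_s^0$ combined with the $\mathbb{Q}$-linearity of the target''. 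That is not an argument: unique divisibility of $\mathcal{C}$ only makes $\ext^1(P^0,\mathcal{C})$ a $\mathbb{Q}$-vector space, and divisibility of the source adds nothing, since multiplication by $n$ is then invertible on both variables simultaneously; a uniquely divisible sheaf need not be an injective object, and $\ext^1$ of a divisible group by a $\mathbb{Q}$-sheaf can perfectly well be a nonzero $\mathbb{Q}$-vector space. You flag this yourself in your closing paragraph as ``the principal obstacle'', but you never actually overcome it.

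The paper does not prove this vanishing; it proves only the weaker statement that the connecting map $\ext^1_{\text{\rm kpl}}(P^0,\gmlog)\to\ext^1_{\text{\rm kpl}}(P^0,\mathcal{C})$ is zero, by establishing directly (Lemme \ref{ext2}) that $\ext^1_{\text{\rm kpl}}(P^0,\gm)\to\ext^1_{\text{\rm pl}}(P_\eta,\gmeta)$ is already surjective. The essential input there is classical and \'etale: one applies $\homr(P^0,-)$ to $0\to\gm\to(\jcl)_*\gmeta\to i_*\mathbb{Z}_k\to 0$ on the smooth site and invokes $\ext^1_{\text{\rm \'et}}(P^0,i_*\mathbb{Z}_k)=0$ \cite[expos\'e VIII, proposition 5.5 (i)]{gro7}, which uses the connectedness of $P^0_s$ in an essential way; the case of general $P$ then follows by comparing the exact rows for $P$ and $P^0$ and using only $\ext^1_{\text{\rm kpl}}(i_*\Phi,\mathcal{C})=0$. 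Your proposal omits precisely this classical connected-fibre input, and the divisibility considerations you substitute for it do not suffice. A secondary caveat: your route to $\ext^1_{\text{\rm kpl}}(P,\gmlog)\simeq\ext^1_{\text{\rm pl}}(P_\eta,\gmeta)$ via $Rj_*\gmeta\simeq\gmlog$ on the big Kummer-flat site is dubious, because for an arbitrary fs log scheme $T$ over $S$ there is no reason to have $M_T^{\rm gp}=j_*\mathcal{O}_{T_U}^*$; the paper instead works on the smooth site, where log structures are pulled back from $S$, and uses $R^1q_*\gmlog=0$ (Lemme \ref{fx5}). The same two remarks apply verbatim to the biextension statement in part $(ii)$.
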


Comme $P$ est un objet du site $(Lis/S)_{\text{\rm \'et}}$, nous avons $q^*P=P$, d'o\`u un isomorphisme canonique
\begin{equation}
\label{isofonct}
\homr(P,-) \simeq \homr(P,q_*-)
\end{equation}
o\`u le membre de droite est calcul\'e dans la cat\'egorie des faisceaux sur $(Lis/S)_{\text{\rm \'et}}$. Ceci nous permet d'\'enoncer un premier lemme.

\begin{lem}
\label{hom01}
Avec les notations pr\'ec\'edentes, la relation
$$
\homr(P,(\gmlog/\gm)^{\text{\rm kpl}})=0
$$
est v\'erifi\'ee si $\Phi$ est de torsion. Dans ce cas, le morphisme naturel
$$
\ext^1_{\text{\rm kpl}}(P,\gm)\longrightarrow \ext^1_{\text{\rm kpl}}(P,\gmlog)
$$
est injectif.
\end{lem}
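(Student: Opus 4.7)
La stratégie serait de traduire le calcul de $\homr$ sur le site kpl en un calcul plus concret sur le site lisse de $S$, puis sur la fibre spéciale.

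D'abord, en partant de l'isomorphisme \eqref{isofonct} et du lemme \ref{fx7}, j'écrirais
$$
\homr_{\text{kpl}}(P,(\gmlog/\gm)^{\text{kpl}}) \simeq \homr_{\text{lis}}(P, q_*(\gmlog/\gm)^{\text{kpl}}) = \homr_{\text{lis}}(P, \underline{D}\otimes_{\mathbb{Z}}\mathbb{Q}).
$$
Dans le cadre de ce paragraphe, $S$ est un trait et $D=s$ est un point, donc trivialement géométriquement unibranche; ainsi $\underline{D}=i_*\mathbb{Z}$ sur le site lisse (cf. la description qui suit la suite \eqref{suite1}), et $\underline{D}\otimes\mathbb{Q}=i_*\mathbb{Q}$ où $\mathbb{Q}$ désigne le faisceau constant sur $(Lis/s)_{\text{\'et}}$. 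Par adjonction entre $i^*$ et $i_*$, le calcul se ramène alors à celui de $\homr_{\text{lis}/s}(P_s,\mathbb{Q})$.

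Pour montrer l'annulation de ce dernier groupe sous l'hypothèse que $\Phi$ est de torsion, je procéderais en deux temps. Premièrement, tout morphisme de faisceaux abéliens $\phi:P_s\to\mathbb{Q}$ s'annule sur $P_s^0$: la connexité de $P_s^0$ donne $\mathbb{Q}(P_s^0)=\mathbb{Q}$, donc la valeur $q\in\mathbb{Q}$ prise par $\phi$ sur l'élément universel $\text{id}\in P_s^0(P_s^0)$ contrôle, par naturalité, $\phi$ sur toutes les sections de $P_s^0$ (les flèches de restriction sur $\mathbb{Q}$ étant triviales puisque $\mathbb{Q}$ est constant); en évaluant $\phi$ sur la section identité, la propriété de morphisme de groupes donne alors $q=0$. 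Deuxièmement, le morphisme induit $\Phi\to\mathbb{Q}$ est nul, car pour toute section $x$ de $\Phi$, il existe $n$ avec $nx=0$, d'où $n\phi(x)=0$ et donc $\phi(x)=0$ puisque $\mathbb{Q}$ est sans torsion.

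Pour la seconde assertion, il suffit d'appliquer le foncteur $\homr(P,-)$ à la suite exacte \eqref{suite2}, ce qui donne la suite exacte longue contenant
$$
\homr(P,(\gmlog/\gm)^{\text{kpl}})\longrightarrow\ext^1_{\text{kpl}}(P,\gm)\longrightarrow\ext^1_{\text{kpl}}(P,\gmlog).
$$
L'annulation du premier terme, déjà établie, entraîne l'injectivité de la flèche voulue.

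Le point le plus délicat me semble être l'annulation de $\homr_{\text{lis}/s}(P_s,\mathbb{Q})$: il faut manipuler avec soin les éléments universels sur le gros site lisse-étale et justifier que $\phi|_{P_s^0}=0$ à partir de la connexité de $P_s^0$ et de la constance du faisceau cible $\mathbb{Q}$.
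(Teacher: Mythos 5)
Votre preuve est correcte et suit essentiellement le même chemin que celle de l'article : réduction par adjonction \eqref{isofonct} et par le lemme \ref{fx7} au calcul de $\homr(P_s,\mathbb{Q}_k)$, annulation sur $P_s^0$ par connexité puis sur $\Phi$ parce que $\mathbb{Q}$ est sans torsion, et enfin la suite exacte longue associée à \eqref{suite2} pour la seconde assertion. Vous ne faites qu'expliciter (correctement) les deux annulations que l'article se contente d'affirmer.
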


\begin{proof}
Par la relation d'adjonction \eqref{isofonct}, il suffit de montrer que
$$
\homr(P,q_*(\gmlog/\gm)^{\text{\rm kpl}})=0
$$
D'apr\`es le lemme \ref{fx7}, $q_*(\gmlog/\gm)^{\text{\rm kpl}}=\underline{D}\otimes_{\mathbb{Z}} \mathbb{Q}=i_*\mathbb{Q}_k$ (lequel est un faisceau constant tronqu\'e sans torsion). De nouveau par adjonction, il vient
$$
\homr(P,i_*\mathbb{Q}_k)=\homr(P_s,\mathbb{Q}_k)
$$
Comme $P_s^0$ est connexe, on sait que $\homr(P_s^0,\mathbb{Q}_k)=0$, d'o\`u
$$
\homr(P_s,\mathbb{Q}_k)=\homr(\Phi,\mathbb{Q}_k)
$$
Ce dernier est nul si $\Phi$ est de torsion, d'o\`u le r\'esultat. Le second point d\'ecoule du premier en appliquant le foncteur $\homr(P,-)$ \`a la suite exacte \eqref{suite2}.
\end{proof}

\begin{lem}
\label{ext1gmlog}
Dans le diagramme commutatif
$$
\begin{CD}
\ext^1_{\text{\rm \'et}}(P,(\jcl)_*\gmeta) @>q^*>> \ext^1_{\text{\rm kpl}}(P,\gmlog) \\
@V\jcl^*VV @VVj^*V \\
\ext^1_{\text{\rm \'et}}(P_{\eta},\gmeta) @>>> \ext^1_{\text{\rm pl}}(P_{\eta},\gmeta) \\
\end{CD}
$$
toutes les fl\`eches sont des isomorphismes.
\end{lem}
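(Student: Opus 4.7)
Ma stratégie est parallèle à celle employée pour la proposition \ref{h1gmlog} : j'établirais que les trois flèches $q^*$, $\jcl^*$ et la flèche horizontale du bas sont des isomorphismes, puis la commutativité du diagramme forcerait $j^*$ à en être un également.

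Pour chacune des trois flèches, j'invoquerais la suite spectrale de Grothendieck pour les Ext associée à un morphisme de sites $f$, à savoir
\[
E_2^{p,q} = \ext^p_T(P, R^q f_* F) \Longrightarrow \ext^{p+q}_S(f^*P, F),
\]
qui provient de la composition d'adjonction $\homr_T(P, f_* -) \simeq \homr_S(f^*P, -)$ et de la préservation des injectifs par $f_*$ (conséquence de l'exactitude de $f^*$). Dans les trois cas, il suffira d'établir l'annulation de $R^1 f_* F$, ce qui entraîne celle du terme $E_2^{0,1}$ et ramène la suite spectrale à un isomorphisme en degré un.

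Pour la flèche horizontale du haut, je prendrais $f = q$ et $F = \gmlog$ : les lemmes \ref{fx1} et \ref{fx5} fournissent respectivement $q_*\gmlog = (\jcl)_*\gmu$ (ce qui permet d'identifier le terme $E_2^{1,0}$ au membre de gauche) et $R^1 q_*\gmlog = 0$. Pour la flèche verticale de gauche, je prendrais $f = \jcl$ et $F = \gmu$ : l'annulation $R^1(\jcl)_*\gmu = 0$ sur le site lisse de $S$ résulte de la factorialité des anneaux locaux de $S$ (régularité de $S$), par le même raisonnement standard que dans la preuve de la proposition \ref{h1gmlog}. Pour la flèche horizontale du bas, enfin, j'invoquerais le théorème 11.7 de l'Appendice de \cite{gb3} appliqué sur $U$, qui donne $R^i m_* \gmu = 0$ pour $i > 0$ (où $m$ compare les topologies fppf et étale sur $U$) ; la suite spectrale correspondante permet alors d'identifier les deux Ext en bas du diagramme.

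Le point délicat me semble essentiellement formel : il faudra s'assurer que la suite spectrale de Grothendieck écrite ci-dessus est bien à notre disposition pour les $\ext$ (et non seulement pour les cohomologies) associées aux morphismes de sites considérés. Ceci ne pose toutefois pas de difficulté sérieuse, car les images réciproques $f^*$ concernées sont exactes et leurs adjointes à droite $f_*$ préservent donc les injectifs, ce qui rend la suite spectrale de composition disponible sans obstruction.
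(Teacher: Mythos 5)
Votre démonstration est correcte et suit essentiellement la même voie que celle de l'article : pour la flèche $q^*$, c'est exactement l'argument du texte (adjonction \eqref{isofonct}, suite spectrale des foncteurs composés, annulation de $R^1q_*\gmlog$ par le lemme \ref{fx5} et identification $q_*\gmlog=(\jcl)_*\gmeta$ par le lemme \ref{fx1}), et pour les deux autres flèches vous explicitez précisément ce que l'article résume par \og{}le principe étant le même que dans la preuve de la proposition \ref{h1gmlog}\fg{}. Rien à redire.
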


\begin{proof}
Le principe \'etant le m\^eme que dans la preuve de la proposition \ref{h1gmlog}, nous nous contenterons de montrer que $q^*$ est un isomorphisme. Le point de d\'epart est l'isomorphisme canonique de foncteurs \eqref{isofonct}.
Le premier foncteur d\'eriv\'e de $\homr(P,-)$ \'evalu\'e sur $\gmlog$ nous donne $\ext^1_{\text{\rm kpl}}(P,\gmlog)$. D'autre part, comme $R^1q_*\gmlog=0$ (lemme \ref{fx5}), la suite spectrale de d\'erivation des foncteurs compos\'es montre qu'en d\'erivant le foncteur  $\homr(P,q_*-)$ et en l'\'evaluant sur $\gmlog$ on obtient $\ext^1_{\text{\rm \'et}}(P,q_*\gmlog)$. D'o\`u le r\'esultat, sachant que $q_*\gmlog=(\jcl)_*\gmeta$ (lemme \ref{fx1}).
\end{proof}

Soit $P^0$ le $S$-sch\'ema obtenu en recollant la fibre g\'en\'erique de $P$ et la composante neutre de sa fibre sp\'eciale. Ainsi, $P^0$ est un sous-sch\'ema en groupes ouvert de $P$, et nous avons une suite exacte  (pour la topologie \'etale) de $S$-sch\'emas en groupes
\begin{equation}
\label{componentgroup}
\begin{CD}
0 @>>> P^0 @>>> P @>>> i_*\Phi @>>> 0 \\
\end{CD}
\end{equation}

\begin{lem}
\label{ext2}
La fl\`eche de restriction \`a la fibre g\'en\'erique
$$
(j^*)^0:\ext^1_{\text{\rm kpl}}(P^0,\gm)\longrightarrow \ext^1_{\text{\rm pl}}(P_{\eta},\gmeta)
$$
est un isomorphisme.
\end{lem}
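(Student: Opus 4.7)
The plan is to split the claim into injectivity and surjectivity of $(j^*)^0$, handled by separate arguments. The key observation is that $P^0$ has connected special fibre by construction, so its component group $(P^0)_s/(P^0)_s^0$ vanishes.

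For injectivity, I would apply Lemma \ref{hom01} to $P^0$: since its component group is trivially torsion, the natural map $\ext^1_{\text{\rm kpl}}(P^0,\gm) \to \ext^1_{\text{\rm kpl}}(P^0,\gmlog)$ is injective. Composing with the isomorphism $\ext^1_{\text{\rm kpl}}(P^0,\gmlog) \simeq \ext^1_{\text{\rm pl}}(P^0_\eta,\gmeta)$ furnished by Lemma \ref{ext1gmlog}, and recalling that $P^0_\eta = P_\eta$, one recognises $(j^*)^0$ as the composite, hence injective.

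For surjectivity I would compare with the classical fppf Ext. By Grothendieck's theorem \cite[exp.\ VIII, Th.\ 7.1]{gro7} (explicated in \cite[App.\ C, Prop.\ C.14]{milne86}), for a smooth commutative $S$-group scheme with connected special fibre the monodromy obstruction vanishes, so the classical restriction
$$
\ext^1_{\text{\rm pl}}(P^0,\gm) \longrightarrow \ext^1_{\text{\rm pl}}(P_\eta,\gmeta)
$$
is an isomorphism. The comparison map $\varepsilon^{*}\colon \ext^1_{\text{\rm pl}}(P^0,\gm) \to \ext^1_{\text{\rm kpl}}(P^0,\gm)$ induced by the morphism of sites $\varepsilon$ fits into a commutative triangle with $(j^*)^0$ whose third side is exactly this classical restriction (the commutativity expresses the compatibility $j^{*}_{\rm pl} = (j^*)^0 \circ \varepsilon^{*}$, which follows from the fact that restriction to $\eta$ in either topology yields the generic fibre). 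Surjectivity of the hypotenuse thus forces $(j^*)^0$ to be surjective.

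The only real input is the classical SGA 7 statement in the connected-fibre case; everything else is a formal combination of Lemmas \ref{hom01} and \ref{ext1gmlog} with the standard functoriality of the site morphisms relating fppf and kpl cohomology. The main subtlety to verify carefully is therefore the commutativity of the comparison triangle, which amounts to writing down the correct composition of the three morphisms of sites involved.
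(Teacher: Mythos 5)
Your injectivity argument coincides with the paper's: Lemma \ref{hom01} applied to $P^0$ (whose component group vanishes, hence is trivially of torsion) followed by Lemma \ref{ext1gmlog}. For surjectivity you take a genuinely different route. You quote the case $\Gamma=0$ of Grothendieck's th\'eor\`eme 7.1 to get surjectivity of the classical restriction $\ext^1_{\text{\rm pl}}(P^0,\gm)\to\ext^1_{\text{\rm pl}}(P_{\eta},\gmeta)$ and push it through the comparison map $\varepsilon^*$; the triangle you need does commute, since the log structure is trivial over $\eta$, so this works. The paper instead stays on the smooth site: it factors $(j^*)^0\circ q^*$ as $\ext^1_{\text{\rm \'et}}(P^0,\gm)\to\ext^1_{\text{\rm \'et}}(P^0,(\jcl)_*\gmeta)\simeq\ext^1_{\text{\rm pl}}(P_{\eta},\gmeta)$ and proves the first arrow bijective by applying $\homr(P^0,-)$ to the sequence \eqref{suite1bis} and killing the two extremal terms: $\homr(P^0,i_*\mathbb{Z}_k)=0$ by connectedness of $P_s^0$, and $\ext^1_{\text{\rm \'et}}(P^0,i_*\mathbb{Z}_k)=0$ by SGA~7, expos\'e VIII, prop.~5.5(i). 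Thus the paper only consumes the elementary ingredient from which th\'eor\`eme 7.1 is itself built, in effect re-proving the $\Gamma=0$ case in passing; this keeps the argument self-contained and fits the logic of the section, where Grothendieck's obstructions are meant to be \emph{recovered} as output (th\'eor\`eme \ref{retourmonodromie}) rather than fed in as input. Your shortcut is not circular, but it does silently require that th\'eor\`eme 7.1 applies to fppf Ext groups (equivalently, that fppf and smooth-site extensions of a smooth group scheme by $\gm$ agree --- true, since such extensions are representable, but worth saying). Finally, the paper's version of the argument also yields that $q^*:\ext^1_{\text{\rm \'et}}(P^0,\gm)\to\ext^1_{\text{\rm kpl}}(P^0,\gm)$ is an isomorphism, a by-product your route does not produce and which is implicitly reused later in the construction of the map $\tau$ in the proof of th\'eor\`eme \ref{retourmonodromie}.
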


\begin{proof}
Consid\'erons le diagramme commutatif suivant
$$
\begin{CD}
\ext^1_{\text{\rm \'et}}(P^0,\gm) @>q^*>> \ext^1_{\text{\rm kpl}}(P^0,\gm) \\
@VVV @VV(j^*)^0V \\
\ext^1_{\text{\rm \'et}}(P^0,(\jcl)_*\gmeta) @>\jcl^*>> \ext^1_{\text{\rm pl}}(P_{\eta},\gmeta) \\
\end{CD}
$$
La fl\`eche $q^*$ est (clairement) injective, et $\jcl^*$ est un isomorphisme. De plus, la fl\`eche $(j^*)^0$ s'\'ecrit comme la compos\'ee des fl\`eches
$$
\ext^1_{\text{\rm kpl}}(P^0,\gm)\longrightarrow \ext^1_{\text{\rm kpl}}(P^0,\gmlog)\longrightarrow \ext^1_{\text{\rm pl}}(P_{\eta},\gmeta)
$$
lesquelles sont toutes les deux injectives : la premi\`ere par le lemme \ref{hom01}, la deuxi\`eme par le lemme \ref{ext1gmlog}. Donc $(j^*)^0$ est injective. Enfin, la fl\`eche verticale de gauche est bijective. Nous avons en effet une suite exacte (obtenue en appliquant le foncteur $\homr(P^0,-)$ \`a la suite \eqref{suite1bis} sur le site lisse)
$$
\homr(P^0,i_*\mathbb{Z}_k)\longrightarrow \ext^1_{\text{\rm \'et}}(P^0,\gm)\longrightarrow \ext^1_{\text{\rm \'et}}(P^0,(\jcl)_*\gmeta)\longrightarrow \ext^1_{\text{\rm \'et}}(P^0,i_*\mathbb{Z}_k)
$$
dont les termes extr\'emaux sont nuls : celui de gauche par connexit\'e de $P_s^0$, celui de droite en vertu de \cite[expos\'e VIII, proposition 5.5 (i)]{gro7}. On d\'eduit de tout cela que $(j^*)^0$ et $q^*$ sont des isomorphismes.
\end{proof}

\begin{proof}[D\'emonstration du th\'eor\`eme \ref{sga7style}]
Montrons le point $(i)$.
En appliquant successivement les foncteurs $\homr(P,-)$ et $\homr(P^0,-)$ \`a la suite exacte \eqref{suite2} on obtient un diagramme commutatif, \`a lignes exactes
$$
\begin{CD}
0\rightarrow \; @. \ext^1_{\text{\rm kpl}}(P,\gm) @>>> \ext^1_{\text{\rm kpl}}(P,\gmlog)@>>> \ext^1_{\text{\rm kpl}}(P,(\gmlog/\gm)^{\text{\rm kpl}}) \\
@. @Vf_0 VV @Vf_1 VV @Vf_2 VV \\
0\rightarrow \; @. \ext^1_{\text{\rm kpl}}(P^0,\gm) @>>> \ext^1_{\text{\rm kpl}}(P^0,\gmlog)@>>> \ext^1_{\text{\rm kpl}}(P^0,(\gmlog/\gm)^{\text{\rm kpl}}) \\
\end{CD}
$$
les z\'eros \`a gauche \'etant d\'eduits du lemme \ref{hom01}. La fl\`eche $j^*$ de l'\'enonc\'e est \'egale \`a $(j^*)^0\circ f_0$ et on sait que $(j^*)^0$ est un isomorphisme d'apr\`es le lemme \ref{ext2}. Par cons\'equent, $j^*$ est injective (resp. est un isomorphisme) si et seulement si $f_0$ l'est.

Montrons tout d'abord que la fl\`eche $f_0$ est injective. En composant les fl\`eches
$$
\begin{CD}
\ext^1_{\text{\rm kpl}}(P,\gmlog)@>f_1>> \ext^1_{\text{\rm kpl}}(P^0,\gmlog)@>>> \ext^1_{\text{\rm pl}}(P_{\eta},\gmeta)
\end{CD}
$$
on obtient le morphisme naturel de restriction \`a la fibre g\'en\'erique, qui est un isomorphisme d'apr\`es le lemme \ref{ext1gmlog}. Mais la deuxi\`eme fl\`eche est \'egalement un isomorphisme d'apr\`es le m\^eme lemme appliqu\'e \`a $P^0$. Ceci implique que $f_1$ est un isomorphisme et, au vu du diagramme pr\'ec\'edent, que $f_0$ est injective.

Supposons \`a pr\'esent qu'il existe un entier $m>0$ tel que $m\Phi=0$. 
La multiplication par $m$ est nulle sur le faisceau $i_*\Phi$, et constitue un automorphisme du faisceau $(\gmlog/\gm)^{\text{\rm kpl}}$ (d'apr\`es le lemme \ref{unidiv}). Il en r\'esulte que
$$
\ext^1_{\text{\rm kpl}}(i_*\Phi,(\gmlog/\gm)^{\text{\rm kpl}})=0
$$
On en d\'eduit, par passage \`a la cohomologie dans la suite \eqref{componentgroup}, que $f_2$ est injective. En appliquant le lemme du serpent au diagramme commutatif pr\'ec\'edent, on trouve que le conoyau de $f_0$ est nul, et $f_0$ est un isomorphisme.

La d\'emonstration du point $(ii)$ est essentiellement la m\^eme. On dispose en effet, d'apr\`es \cite[expos\'e VII, corollaire 3.6.5]{gro7} d'un isomorphisme canonique
$$
\biext^1_{\text{\rm kpl}}(P,Q;\gm)\simeq \ext^1_{\text{\rm kpl}}(P\otimes^{\mathbb{L}}Q;\gm)
$$
ce qui permet d'appliquer le m\^eme formalisme cohomologique que pr\'ec\'edemment.
\end{proof}

Le th\'eor\`eme \ref{sga7style} permet d'effectuer une relecture, sous un \'eclairage logarithmique, du r\'esultat bien connu de Grothendieck \cite[expos\'e VIII, th\'eor\`eme 7.1]{gro7} dans lequel sont construites des obstructions qui mesurent le d\'efaut de surjectivit\'e du morphisme de restriction $\ext^1_{\text{\rm pl}}(P,\gm)\longrightarrow \ext^1_{\text{\rm pl}}(P_{\eta},\gmeta)$ (ainsi que du morphisme analogue pour les groupes de biextensions).

Comme nous allons le voir, on peut retrouver \`a partir du th\'eor\`eme \ref{sga7style} les obstructions d\'efinies par Grothendieck.
Pour cela, nous introduisons la suite spectrale
\begin{equation}
\label{spectral1}
\begin{CD}
0\longrightarrow \ext^1_{\text{\rm pl}}(P,\gm)\longrightarrow \ext^1_{\text{\rm kpl}}(P,\gm) @>\delta >> \homr(P,R^1\varepsilon_* \gm) \\
\end{CD}
\end{equation}
dont le dernier terme n'est autre que
$\homr(P,i_*(\mathbb{Q}/\mathbb{Z})_k)=\homr(\Phi,(\mathbb{Q}/\mathbb{Z})_k)$,
ainsi que son alter ego dans le monde des biextensions
\begin{equation}
\label{spectral2}
\begin{CD}
0\longrightarrow \biext^1_{\text{\rm pl}}(P,Q;\gm)\longrightarrow \biext^1_{\text{\rm kpl}}(P,Q;\gm) @>\partial >> \biext^0(P,Q;R^1\varepsilon_* \gm) \\
\end{CD}
\end{equation}
dont le dernier terme est
$\homr(P\otimes Q,i_*(\mathbb{Q}/\mathbb{Z})_k)=\homr(\Phi\otimes\Psi,(\mathbb{Q}/\mathbb{Z})_k)$.

\begin{thm}
\label{retourmonodromie}
\begin{enumerate}
\item[$(i)$] Avec les notations du th\'eor\`eme \ref{sga7style} $(i)$, supposons qu'il existe un entier $m>0$ tel que $m\Phi=0$. Soit $E_{\eta}$ une extension de $P_{\eta}$ par $\gmeta$, et soit $E\in \ext^1_{\text{\rm kpl}}(P,\gm)$ l'unique prolongement de $E_{\eta}$. Alors le morphisme $d(E_{\eta})$ d\'efini par Grothendieck dans \cite[expos\'e VIII, 7.1.1]{gro7} est \'egal \`a l'image de $E$ par le morphisme $\delta$ de la suite \eqref{spectral1}.
\item[$(ii)$] Avec les notations du th\'eor\`eme \ref{sga7style} $(ii)$, supposons qu'il existe un entier $m>0$ tel que $m\Phi=0$ ou $m\Psi=0$. Soit $E_{\eta}$ une biextension de $(P_{\eta},Q_{\eta})$ par $\gmeta$, et soit $E\in \biext^1_{\text{\rm kpl}}(P,Q;\gm)$ l'unique prolongement de $E_{\eta}$. Alors le morphisme $d(E_{\eta})$ d\'efini par Grothendieck dans \cite[expos\'e VIII, 7.1.2]{gro7} est \'egal \`a l'image de $E$ par le morphisme $\partial$ de la suite \eqref{spectral2}.
\end{enumerate}
\end{thm}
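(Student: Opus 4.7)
The plan is to prove $(i)$ in detail and to deduce $(ii)$ by invoking the canonical identification $\biext^1(P,Q;\gm)\simeq\ext^1(P\otimes^{\mathbb{L}}Q,\gm)$ of \cite[expos\'e VII, 3.6.5]{gro7}: being natural in both arguments, it transports $\partial$ to $\delta$ and Grothendieck's biextension obstruction of \cite[expos\'e VIII, 7.1.2]{gro7} to his extension obstruction of \cite[expos\'e VIII, 7.1.1]{gro7} compatibly.

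For $(i)$, both $d(E_\eta)$ and $\delta(E)$ take values in the same group $\homr(\Phi,(\mathbb{Q}/\mathbb{Z})_k)$, so the task is to show they coincide on the distinguished lift $E$. The strategy is to realise each as a boundary map extracted from one and the same commutative diagram, obtained by combining the component-group sequence \eqref{componentgroup} with the short exact sequences \eqref{suite2} and \eqref{suite1bis}. Concretely, I would fix $E$, restrict it along $P^0\hookrightarrow P$ to an element $E^0\in\ext^1_{\text{kpl}}(P^0,\gm)\simeq\ext^1_{\text{pl}}(P_\eta,\gmeta)$ (Lemma \ref{ext2}), and then compute its image under two parallel connecting morphisms: the one coming from \eqref{componentgroup}, landing in $\ext^2_{\text{pl}}(i_*\Phi,\gm)\simeq\homr(\Phi,\mathbb{Q}/\mathbb{Z})$ and reproducing $d(E_\eta)$ by construction of Grothendieck's obstruction; and the one coming from \eqref{suite2}, which via Theorem \ref{comparaison} (applied with $D=\{s\}$, so that $\underline{D}\otimes\mathbb{Q}/\mathbb{Z}=i_*(\mathbb{Q}/\mathbb{Z})_k$) and the adjunction $\homr(P,i_*-)\simeq\homr(\Phi,-)$ identifies with the edge morphism $\delta(E)$.

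The equality then reduces to a snake-lemma diagram chase: both $d(E_\eta)$ and $\delta(E)$ appear as images of $E$ under the same composition of connecting maps, taken in different orders, in a $3\times 3$ commutative diagram whose rows are the two exact sequences above. The main obstacle will be verifying the naturality statement that the two identifications of $\homr(\Phi,\mathbb{Q}/\mathbb{Z})$ involved---one classical, obtained from $\ext^2_{\text{pl}}(i_*\Phi,\gm)$, and one Kummer-theoretic, due to Kato---really agree, not merely up to an abstract isomorphism. This amounts to a functorial comparison of the classical Kummer sequence for $\gm$ with its logarithmic avatar \eqref{kummerlog}, which one checks by tracing both edge maps through $\mu_n$ for any integer $n$ annihilating $\Phi$. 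Once this compatibility is in place, the theorem drops out formally.
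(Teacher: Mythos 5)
Your overall architecture is reasonable (handle $(ii)$ by $\biext^1(P,Q;\gm)\simeq\ext^1(P\otimes^{\mathbb{L}}Q,\gm)$, and prove $(i)$ by matching two boundary maps in a commutative diagram), but the concrete description you give of Grothendieck's obstruction is not his definition, and that substitution is where the gap lies. In \cite[expos\'e VIII, 7.1.1]{gro7}, $d(E_\eta)$ is \emph{not} defined as the connecting morphism of the component-group sequence \eqref{componentgroup} landing in $\ext^2_{\text{\rm pl}}(i_*\Phi,\gm)$; it is defined by first extending $E_\eta$ uniquely to an extension of $P$ by $(\jcl)_*\gmeta$ on the smooth site, then pushing forward along the valuation $\lambda:(\jcl)_*\gmeta\to i_*\mathbb{Z}_k$ of \eqref{suite1bis} to get an element of $\ext^1_{\text{\rm \'et}}(P,i_*\mathbb{Z}_k)$, and finally applying the inverse of the coboundary $c:\homr(P,i_*(\mathbb{Q}/\mathbb{Z})_k)\to\ext^1_{\text{\rm \'et}}(P,i_*\mathbb{Z}_k)$ coming from $0\to\mathbb{Z}\to\mathbb{Q}\to\mathbb{Q}/\mathbb{Z}\to 0$ (an isomorphism by \cite[expos\'e VIII, 5.5 (ii)]{gro7}). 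Your proposed connecting map has the same kernel as $d$ (that is essentially the exactness statement of Grothendieck's theorem), but equality of the two maps into $\homr(\Phi,\mathbb{Q}/\mathbb{Z})$ would itself require identifying $\ext^2_{\text{\rm pl}}(i_*\Phi,\gm)$ with $\homr(\Phi,\mathbb{Q}/\mathbb{Z})$ compatibly with both constructions --- a comparison that is nowhere supplied and that carries essentially all the content of the theorem. Asserting it ``by construction of Grothendieck's obstruction'' is therefore circular.

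The paper's proof avoids this by working directly with Grothendieck's definition: it forms the square with vertices $\ext^1_{\text{\rm kpl}}(P,\gm)$, $\homr(P,i_*(\mathbb{Q}/\mathbb{Z})_k)$, $\ext^1_{\text{\rm \'et}}(P,(\jcl)_*\gmeta)$, $\ext^1_{\text{\rm \'et}}(P,i_*\mathbb{Z}_k)$, with maps $\delta$, $c$, $\tau$, $\lambda_*$, and reduces the theorem to the commutativity $c(\delta(E))=\lambda_*(\tau(E))$. This is then checked explicitly: push $E$ out along $\gm\to\gmlog$ to get $E'$ with $E'/E\simeq(\gmlog/\gm)^{\text{\rm kpl}}$, apply $q_*$ (using $R^1q_*\gmlog=0$, $q_*(\gmlog/\gm)^{\text{\rm kpl}}=i_*\mathbb{Q}_k$ and $R^1q_*\gm=i_*(\mathbb{Q}/\mathbb{Z})_k$), and observe that the pull-back of $0\to i_*\mathbb{Z}_k\to i_*\mathbb{Q}_k\to i_*(\mathbb{Q}/\mathbb{Z})_k\to 0$ along $\delta(E)$ coincides with the push-forward of $\tau(E)=q_*E'$ along $\lambda$. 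If you want to salvage your plan, replace your characterization of $d(E_\eta)$ by the valuation-based one and carry out precisely this comparison; the sequence \eqref{componentgroup} plays no role here (it is used in the proof of Theorem \ref{sga7style}, not in the identification of the obstruction).
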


\begin{proof}
Montrons le $(i)$. On peut r\'esumer la situation par le diagramme
\begin{equation}
\label{diagproof}
\begin{CD}
\ext^1_{\text{\rm kpl}}(P,\gm) @>\delta >> \homr(P,i_*(\mathbb{Q}/\mathbb{Z})_k)@. \;=\homr(\Phi,(\mathbb{Q}/\mathbb{Z})_k) \\
@V\tau VV @VVcV @. \\
\ext^1_{\text{\rm \'et}}(P,(\jcl)_*\gmeta) @>\lambda_*>> \ext^1_{\text{\rm \'et}}(P,i_*\mathbb{Z}_k)@. \\
\end{CD}
\end{equation}
dans lequel la fl\`eche $\tau$ est obtenue en combinant les isomorphismes
$$
\begin{CD}
\ext^1_{\text{\rm kpl}}(P,\gm) @>>> \ext^1_{\text{\rm pl}}(P_{\eta},\gmeta) @<<< \ext^1_{\text{\rm \'et}}(P,(\jcl)_*\gmeta) \\
\end{CD}
$$
du th\'eor\`eme \ref{sga7style} $(i)$ et du lemme \ref{ext1gmlog}. La fl\`eche $c$ est le cobord obtenu en appliquant le foncteur $\homr(P,-)$ \`a la suite exacte
$$
\begin{CD}
0 @>>> i_*\mathbb{Z}_k @>>> i_*\mathbb{Q}_k @>>> i_*(\mathbb{Q}/\mathbb{Z})_k @>>> 0 \\
\end{CD}
$$
lequel cobord est d'ailleurs un isomorphisme d'apr\`es \cite[expos\'e VIII, proposition 5.5 (ii)]{gro7}. La fl\`eche $\lambda_*$ est induite par le morphisme $\lambda$ de la suite \eqref{suite1bis}.

Soit $E\in \ext^1_{\text{\rm kpl}}(P,\gm)$, alors $\tau(E)$ est l'unique prolongement de $E_{\eta}$ en une extension de $P$ par $(\jcl)_*\gmeta$ sur le site lisse. D'autre part, il appara\^it clairement \`a la lecture de la d\'emonstration de \cite[expos\'e VIII, th\'eor\`eme 7.1]{gro7} que le morphisme $d(E_{\eta})$ est l'image de $\tau(E)$ par le morphisme $c^{-1}\circ \lambda_*$. Autrement dit, pour montrer le r\'esultat voulu, il suffit de montrer que \eqref{diagproof} est commutatif, ce que nous allons faire.

On constate tout d'abord, \`a la lueur du lemme \ref{ext1gmlog} et de la d\'emonstration du th\'eor\`eme \ref{sga7style}, que $\tau$ est obtenu en combinant les isomorphismes
$$
\begin{CD}
\ext^1_{\text{\rm kpl}}(P,\gm) @>>> \ext^1_{\text{\rm kpl}}(P,\gmlog) @<q^*<< \ext^1_{\text{\rm \'et}}(P,(\jcl)_*\gmeta) \\
\end{CD}
$$
ce qui se traduit concr\`etement de la fa\c{c}on suivante : soit $E\in \ext^1_{\text{\rm kpl}}(P,\gm)$, et soit $E'$ l'image de $E$ dans $\ext^1_{\text{\rm kpl}}(P,\gmlog)$. Par construction, nous avons un diagramme commutatif (\`a lignes exactes) de faisceaux pour la topologie log plate
$$
\begin{CD}
0 @>>> \gm @>>> E @>>> P @>>> 0 \\
@. @VVV @VVV @| \\
0 @>>> \gmlog @>>> E' @>>> P @>>> 0 \\
\end{CD}
$$
dont on d\'eduit, par le lemme des neuf, que $E'/E\simeq (\gmlog/\gm)^{\text{\rm kpl}}$.
En prenant l'image par le foncteur $q_*$ de ce diagramme sur le site lisse, on trouve, compte tenu de la nullit\'e de $R^1q_* \gmlog$, un diagramme commutatif \`a lignes exactes
$$
\begin{CD}
0 @>>> \gm @>>> q_*E @>>> P @>\delta(E) >> R^1q_* \gm \\
@. @VVV @VVV @| \\
0 @>>> (\jcl)_*\gmeta @>>> q_*E' @>>> P @>>> 0 \\
\end{CD}
$$
dans lequel la ligne du bas est l'extension $\tau(E)$. Mais on sait que $E'/E\simeq (\gmlog/\gm)^{\text{\rm kpl}}$ et par suite $q_*(E'/E)\simeq i_*\mathbb{Q}_k$ d'apr\`es le lemme \ref{fx7}. D'autre part, $R^1q_* \gm =i_*(\mathbb{Q}/\mathbb{Z})_k$ d'apr\`es le corollaire \ref{fx4}. En rassemblant tout cela on en d\'eduit le diagramme commutatif suivant, \`a lignes et colonnes exactes
$$
\begin{CD}
0 @>>> \gm @>>> q_*E @>>> \ker(\delta(E)) @>>> 0\\
@. @VVV @VVV @VVV \\
0 @>>> (\jcl)_*\gmeta @>>> q_*E' @>>> P @>>> 0 \\
@. @V\lambda VV @VVV @VV\delta(E) V \\
0 @>>> i_*\mathbb{Z}_k @>>>  i_*\mathbb{Q}_k @>>> i_*(\mathbb{Q}/\mathbb{Z})_k @>>> 0 \\
\end{CD}
$$
Examinons les deux lignes inf\'erieures de ce diagramme. Par un argument d'alg\`ebre homologique, on peut en d\'eduire que le pull-back de la suite du bas par le morphisme $\delta(E):P\rightarrow i_*(\mathbb{Q}/\mathbb{Z})_k$ est \'egal au push-forward de la suite du milieu (qui n'est autre que l'extension $\tau(E)$) par le morphisme $\lambda:(\jcl)_*\gmeta\rightarrow i_*\mathbb{Z}_k$. Autrement dit, $c(\delta(E))=\lambda_*(\tau(E))$, et par cons\'equent le diagramme \eqref{diagproof} est commutatif, ce qu'on voulait. La d\'emonstration du $(ii)$ est essentiellement la m\^eme.
\end{proof}

\begin{rmq}
Dans le cas o\`u $P^0$ est un sch\'ema semi-ab\'elien sur $S$ (c'est-\`a-dire que ses fibres sont extensions de vari\'et\'es ab\'eliennes pas des tores), il est probablement possible d'\'etablir un lien entre le th\'eor\`eme \ref{sga7style} et la th\'eorie des log sch\'emas ab\'eliens, d\'evelopp\'ee par Kajiwara, Kato et Nakayama dans \cite{kkn}.
\end{rmq}

\begin{rmq}
Avec les notations de \ref{sga7style} $(i)$, supposons qu'il existe un entier $m>0$ tel que $m\Phi=0$. Alors on dispose d'isomorphismes canoniques
$$
\ext^1_{\text{\rm kpl}}(P,\gm)\simeq \ext^1_{\text{\rm kpl}}(P,\gmlog) \simeq \ext^1_{\text{\rm pl}}(P_{\eta},\gmeta)
$$
Par contre, si l'on remplace la topologie Kummer plate par la topologie Kummer \'etale, alors ce r\'esultat n'est plus vrai en g\'en\'eral. En effet (cf. remarque \ref{logetalefailure}), la multiplication par $n$ dans le faisceau quotient $(\gmlog/\gm)^{\text{\rm k\'et}}$ n'est un isomorphisme que si $n$ est premier \`a la caract\'eristique r\'esiduelle $\ell$ du trait $S$. La partie de $\ell$-torsion du groupe $\Phi$ constitue alors une obstruction \`a prolonger les extensions en topologie Kummer \'etale.
\end{rmq}

On peut \'enoncer des r\'esultats analogues \`a ceux du th\'eor\`eme \ref{sga7style} dans le cas g\'en\'eral o\`u $S$ n'est plus forc\'ement un trait. On s'y ram\`ene en remarquant que, si $s$  est un point de codimension $1$ de $S$ qui appartient au support de $D$, alors la log structure induite sur le trait $\Spec(\mathcal{O}_{S,s})$ par celle de $S$ n'est autre que la log structure canonique.

\begin{cor}
\label{corutile}
Supposons que $S$ soit comme dans le paragraphe \ref{logref}. Si, pour chaque point $s$ de codimension $1$ de $S$ qui appartient au support de $D$, les hypoth\`eses du th\'eor\`eme \ref{sga7style} sont satisfaites, alors les conclusions analogues sont valables pour le morphisme $j^*$ de restriction \`a l'ouvert $U$.
\end{cor}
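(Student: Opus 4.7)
The plan is to transpose the proof of Theorem \ref{sga7style} to the general base $S$ by localising at the generic points of the components of $D$. The observation preceding the statement is the key reduction: for every codimension-$1$ point $s_m$ of $S$ lying on the support of $D$, the local ring $S_m := \Spec(\mathcal{O}_{S, s_m})$ is a trait endowed with its canonical log structure, and Theorem \ref{sga7style} applies to $P \times_S S_m$ (and to $Q \times_S S_m$).

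First I would verify that the auxiliary results of Section \ref{sorites} generalise verbatim: the identity $q_*\gmlog = (\jcl)_*\gmu$ (Lemme \ref{fx1}), the vanishings $R^1q_*\gmlog = 0$ (Lemme \ref{fx5}) and $R^1(\jcl)_*\gmu = 0$ (from the regularity of $S$) are local in nature and hold on $(Lis/S)_{\text{\rm \'et}}$. Consequently, Lemme \ref{ext1gmlog} provides an isomorphism $\ext^1_{\text{\rm kpl}}(P, \gmlog) \simeq \ext^1_{\text{\rm pl}}(P_U, \gmu)$, and the same with $P$ replaced by the open subscheme $P^0 \subset P$ obtained by gluing $P_U$ with the identity components of the fibres above each $s_m$; this yields the global analog of Lemme \ref{ext2}.

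The crucial step is the global version of Lemme \ref{hom01}: one must establish $\homr(P, (\gmlog/\gm)^{\text{\rm kpl}}) = 0$. By the adjunction \eqref{isofonct} and Lemme \ref{fx7}, this is $\homr(P, \underline{D} \otimes_{\mathbb{Z}} \mathbb{Q})$ on $(Lis/S)_{\text{\rm \'et}}$. Since $\underline{D}$ decomposes on the small \'etale site as $\bigoplus_m \mathbb{Z}_{D_m}$, this Hom splits as a product indexed by the $D_m$, and each factor is computed at the stalk at $s_m$: it reduces to $\homr(\Phi_m, \mathbb{Q})$, where $\Phi_m$ denotes the component group of $P \times_S S_m$ at its closed point. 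The torsion hypothesis of Theorem \ref{sga7style} at $s_m$ makes this vanish. Analogously, the Ext-vanishing $\ext^1_{\text{\rm kpl}}(i_*\Phi, (\gmlog/\gm)^{\text{\rm kpl}}) = 0$ used in the surjectivity step reduces, component by component, to the hypothesis $m\Phi_m = 0$.

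With these ingredients in place, the diagram chase of Theorem \ref{sga7style}(i) transfers word for word to the global setting, proving part (i) of the corollary; part (ii) then follows via the identification $\biext^1_{\text{\rm kpl}}(P, Q; \gm) \simeq \ext^1_{\text{\rm kpl}}(P \otimes^{\mathbb{L}} Q; \gm)$, exactly as in the final lines of the proof of Theorem \ref{sga7style}. The main obstacle I foresee is the careful formulation of the stalkwise reduction from $(Lis/S)_{\text{\rm \'et}}$ down to each trait $S_m$, but because $\underline{D}$ is a sum of pushforwards from the (geometrically unibranch, by normality of $S$) components $D_m$, this reduction is essentially formal and the whole argument amounts to a bookkeeping extension of the trait case.
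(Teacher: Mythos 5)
Your proposal is correct and is essentially the argument the paper intends: the paper gives no proof of this corollary beyond the remark immediately preceding it (the log structure induced on each local trait $\Spec(\mathcal{O}_{S,s})$ is the canonical one), and your step-by-step transposition of the proof of the th\'eor\`eme \ref{sga7style}, with the local hypotheses at the points $s$ feeding the global vanishing statements, is the natural way to turn that remark into a proof. The decompositions you use are over the finitely many components of $D$, so the splitting of the relevant $\homr$ and $\ext^1$ groups into products, with a possibly different annihilating integer on each factor, is legitimate.

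One assertion should be corrected or qualified. You claim that the $D_m$ are geometrically unibranch \emph{by normality of} $S$; this is not a valid implication, since an irreducible component of a normal crossings divisor on a regular scheme can have self-intersections (a nodal curve on a surface, for instance), and the paper itself only asserts $\underline{D}=\bigoplus_m\mathbb{Z}_{D_m}$ on the smooth site \emph{under} the unibranch hypothesis. Relatedly, when $\dim S\geq 2$ the factor $\homr(P_{D_m},\mathbb{Q})$ is not literally a stalk at $s_m$: one must still argue that a homomorphism from the component-group sheaf of $P$ along all of $D_m$ to $\mathbb{Q}$ vanishes, knowing only that its fibre at the generic point $s_m$ is torsion. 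Both issues disappear when $\dim S=1$, where each $D_m$ is a single closed point; this is the only case in which the corollary is used in the paper (paragraphe \ref{accouplements}). In the generality in which the statement is written, you should either add the unibranch hypothesis and an argument at the non-generic points of $D_m$, or note explicitly that you are treating the one-dimensional case.
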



\subsection{Accouplements divers}
\label{accouplements}

A pr\'esent $S$ est (n\oe{}th\'erien r\'egulier) de dimension $1$, de point g\'en\'erique $\eta=\Spec(K)$.
Soit $\A$ (resp. $\A^t$) le mod\`ele de N\'eron d'une $K$-vari\'et\'e ab\'elienne (resp. de sa vari\'et\'e duale), on note $U\subseteq S$ l'ouvert de bonne r\'eduction de $\A$.
Nous munissons $S$ de la log structure d\'efinie par l'ouvert $U$.

On note $W_U$ la biextension de Weil de $(\A_U,\A^t_U)$ par $\gmu$ (pour la topologie fppf), laquelle traduit la dualit\'e entre les sch\'emas ab\'eliens $\A_U$ et $\A^t_U$. Son $\gmu$-torseur sous-jacent est le fibr\'e de Poincar\'e.

On note $\A^0$ le $S$-sch\'ema obtenu en recollant la fibre g\'en\'erique de $\A$ avec les composantes neutres de ses fibres sp\'eciales. On note $\Phi$ le faisceau quotient $\A/\A^0$  que l'on appelle, par abus de langage, le groupe des composantes de $\A$. En fait, on peut \'ecrire
$$
\Phi=\bigoplus_{s\in D} i_*\Phi_s=\bigoplus_{s\in D} i_*(\A_s/\A_s^0).
$$
On note $\Phi'$ le groupe des composantes de $\A^t$. On appelle (\'egalement par abus) accouplement de monodromie l'objet obtenu en recollant, pour chaque $s\in D$, l'accouplement  d\'efini par \cite[expos\'e IX, 1.2.1]{gro7} (apr\`es changement de base $\Spec(\mathcal{O}_{S,s})\rightarrow S$).

Si $\Gamma\subseteq\Phi$ est un sous-groupe ouvert de $\Phi$, on note $\A^{\Gamma}$ l'image r\'eciproque de $\Gamma$ par le morphisme naturel $\A\rightarrow \Phi$. Il s'agit d'un sous-groupe ouvert de $\A$.

Supposons que $\Gamma\subseteq\Phi$ et $\Gamma'\subseteq\Phi'$ soient des sous-groupes ouverts de $\Phi$ et $\Phi'$. Si $\Gamma$ et $\Gamma'$ sont orthogonaux sous l'accouplement de monodromie, alors il existe une unique biextension de $(\A^{\Gamma},\A^{t,\Gamma'})$ par $\gm$ prolongeant $W_U$, que nous noterons  $W^{\Gamma,\Gamma'}$.

\begin{prop}
\label{restrwlog}
\begin{enumerate}
\item[$(1)$] Il existe une unique biextension $W^{\rm log}$ de $(\A,\A^t)$ par $\gm$ (pour la topologie log plate) prolongeant la biextension de Weil $W_U$.
\item[$(2)$] Soit $\partial$ le morphisme
$$
\partial:\biext^1_{\text{\rm kpl}}(\A,\A^t;\gm) \longrightarrow \homr(\Phi\otimes\Phi',\bigoplus_{s\in D} i_*(\mathbb{Q}/\mathbb{Z})_s)
$$
d\'efini comme dans la suite exacte \eqref{spectral2}. Alors $\partial(W^{\rm log})$ n'est autre que l'accouplement de monodromie.
\item[$(3)$] Supposons que $\Gamma$ et $\Gamma'$ soient orthogonaux sous l'accouplement de monodromie. Alors la restriction de $W^{\rm log}$ \`a $(\A^{\Gamma},\A^{t,\Gamma'})$ est \'egale \`a $W^{\Gamma,\Gamma'}$.
\end{enumerate}
\end{prop}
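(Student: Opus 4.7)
Pour le point (1), il s'agit d'une application directe du corollaire \ref{corutile} au couple $(P,Q)=(\A,\A^t)$. En chaque point $s$ de codimension $1$ de $D$, le groupe des composantes $\Phi_s$ de la fibre sp\'eciale de $\A$ (resp. $\Phi'_s$ de $\A^t$) est fini, et est en particulier annul\'e par un entier $m_s>0$. Les hypoth\`eses du th\'eor\`eme \ref{sga7style}(ii) sont ainsi satisfaites en chaque point de $D$, et le corollaire fournit un isomorphisme
$$
j^*:\biext^1_{\text{\rm kpl}}(\A,\A^t;\gm)\xrightarrow{\sim} \biext^1_{\text{\rm pl}}(\A_U,\A^t_U;\gmu).
$$
Je d\'efinirai alors $W^{\rm log}$ comme l'unique ant\'ec\'edent de $W_U$ par cet isomorphisme.

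Pour le point (2), je proc\'ederai par localisation. La formation de la fl\`eche $\partial$ de la suite \eqref{spectral2} \'etant compatible aux restrictions \'etales, il suffit de v\'erifier l'\'egalit\'e annonc\'ee apr\`es passage \`a chaque trait $\Spec(\mathcal{O}_{S,s})$ pour $s\in D$, muni de sa log structure canonique. Dans ce cadre local, le th\'eor\`eme \ref{retourmonodromie}(ii) identifie $\partial(W^{\rm log})$ \`a l'invariant $d(W_K)$ de Grothendieck d\'efini dans \cite[expos\'e VIII, 7.1.2]{gro7}. Le principal point \`a v\'erifier soigneusement --- et qui constitue \`a mes yeux le seul obstacle r\'eel --- est alors que cet invariant $d$, appliqu\'e \`a la biextension de Weil, co\"{\i}ncide exactement avec l'accouplement de monodromie de \cite[expos\'e IX, 1.2.1]{gro7} ; mais il s'agit essentiellement d'une traduction entre les d\'efinitions de deux expos\'es de SGA 7.

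Pour le point (3), la restriction de $W^{\rm log}$ \`a $(\A^{\Gamma},\A^{t,\Gamma'})$ est une biextension en topologie log plate prolongeant $W_U$. Par fonctorialit\'e de $\partial$ et par le point (2), son image par $\partial$ n'est autre que la restriction de l'accouplement de monodromie au sous-groupe $\Gamma\otimes\Gamma'\subseteq \Phi\otimes\Phi'$, laquelle est nulle par hypoth\`ese d'orthogonalit\'e. La suite exacte \eqref{spectral2}, appliqu\'ee \`a $(\A^{\Gamma},\A^{t,\Gamma'})$ (dont les groupes de composantes $\Gamma$ et $\Gamma'$ sont finis, de sorte que l'on peut appliquer les r\'esultats pr\'ec\'edents), montre alors que cette restriction provient d'une unique biextension fppf de $(\A^{\Gamma},\A^{t,\Gamma'})$ par $\gm$. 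Cette derni\`ere prolongeant $W_U$, elle est n\'ecessairement \'egale \`a $W^{\Gamma,\Gamma'}$, par l'unicit\'e du prolongement fppf rappel\'ee dans l'introduction.
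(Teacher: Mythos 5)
Your proof is correct and, for points $(1)$ and $(2)$, follows exactly the paper's route: $(1)$ is Corollary \ref{corutile} applied to $(\A,\A^t)$ (the component groups being finite, hence killed by some integer), and $(2)$ is Theorem \ref{retourmonodromie}$(ii)$ after localizing at each $s\in D$ — the remaining identification you flag as the ``seul obstacle r\'eel'' is in fact purely definitional, since the monodromy pairing of SGA~7, expos\'e IX, 1.2.1 is precisely Grothendieck's obstruction $d$ applied to the Weil biextension, as the paper itself recalls in its introduction. For $(3)$ you take a slightly longer route than the paper: you show that $\partial$ kills the restriction of $W^{\rm log}$ to $(\A^{\Gamma},\A^{t,\Gamma'})$, use the exact sequence \eqref{spectral2} to descend it to an fppf biextension, and then invoke uniqueness of the fppf prolongation of $W_U$; the paper instead observes directly that both this restriction and the image of $W^{\Gamma,\Gamma'}$ are Kummer log flat biextensions of $(\A^{\Gamma},\A^{t,\Gamma'})$ extending $W_U$, hence coincide by the injectivity of $j^*$ in Theorem \ref{sga7style}$(ii)$. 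Both arguments are valid; the paper's is shorter and uses orthogonality only to guarantee that $W^{\Gamma,\Gamma'}$ exists, while yours makes explicit the role of the vanishing of the monodromy pairing on $\Gamma\otimes\Gamma'$.
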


\begin{proof}
$(1)$ D\'ecoule du corollaire \ref{corutile}. $(2)$ D\'ecoule du th\'eor\`eme \ref{retourmonodromie} $(ii)$.
$(3)$ Il suffit d'invoquer l'unicit\'e du prolongement de $W_U$ en une biextension de $(\A^{\Gamma},\A^{t,\Gamma'})$ par $\gm$ pour la topologie Kummer plate (th\'eor\`eme \ref{sga7style} $(ii)$).
\end{proof}

Si $E$ est une biextension de $(P,Q)$ par $\gm$, nous noterons $t(E)$ le $\gm$-torseur sous-jacent sur $P\times_S Q$.

Nous pouvons \`a pr\'esent d\'efinir deux accouplements. Le premier est celui construit par Mazur et Tate dans \cite[Remark 3.5.3]{mt}, le second est son analogue logarithmique.

\begin{dfn}
\label{defpairings}
\begin{enumerate}
\item[$(1)$] Supposons que $\Gamma$ et $\Gamma'$ soient orthogonaux sous l'accouplement de monodromie. L'accouplement de classes (relativement au couple $(\Gamma,\Gamma')$) est l'application bilin\'eaire d\'efinie par
\begin{equation*}
\begin{split}
\A^{\Gamma}(S)\times\A^{t,\Gamma'}(S)\;\longrightarrow & \;\pic(S) \\
(x,y)\;\longmapsto & \;(x\times y)^*(t(W^{\Gamma,\Gamma'})) \\
\end{split}
\end{equation*}
\item[$(2)$] L'accouplement de classes logarithmique est l'application bilin\'eaire d\'efinie par
\begin{equation*}
\begin{split}
\A(S)\times\A^t(S)\;\longrightarrow & \;H^1_{\text{\rm kpl}}(S, \gm) \\
(x,y)\;\longmapsto & \;(x\times y)^*(t(W^{\rm log})) \\
\end{split}
\end{equation*}
Nous noterons $<-,->^{\rm log}$ cet accouplement dans la suite.
\end{enumerate}
\end{dfn}

\begin{prop}
\label{proppairings}
\begin{enumerate}
\item[$(1)$] On dispose d'un diagramme commutatif
$$
\begin{CD}
\A^{\Gamma}(S)\times\A^{t,\Gamma'}(S) @>>> \pic(S) \\
@VVV @VVV \\
\A(S)\times\A^t(S) @>>> H^1_{\text{\rm kpl}}(S, \gm) \\
\end{CD}
$$
dans lequel les fl\`eches horizontales sont les accouplements, et les fl\`eches verticales sont les inclusions canoniques.
\item[$(2)$] Supposons que $S$ soit un trait. Alors on a un diagramme commutatif
$$
\begin{CD}
\A(S)\times\A^t(S) @>>> H^1_{\text{\rm kpl}}(S, \gm) \\
@VVV @| \\
\Phi(k)\times\Phi'(k) @>>> \mathbb{Q}/\mathbb{Z} \\
\end{CD}
$$
o\`u la fl\`eche horizontale du bas est induite par l'accouplement de monodromie, et la fl\`eche verticale de gauche est obtenue par r\'eduction \`a la fibre sp\'eciale.
\end{enumerate}
\end{prop}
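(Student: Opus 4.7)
Pour le point $(1)$, j'invoquerais directement la proposition \ref{restrwlog} $(3)$, qui assure que la restriction de $W^{\rm log}$ au produit $(\A^{\Gamma},\A^{t,\Gamma'})$ est \'egale \`a $W^{\Gamma,\Gamma'}$. Leurs $\gm$-torseurs sous-jacents s'identifient donc sur $\A^{\Gamma}\times_S\A^{t,\Gamma'}$, de sorte que pour tout couple $(x,y)\in\A^{\Gamma}(S)\times\A^{t,\Gamma'}(S)$, la classe $(x\times y)^*(t(W^{\Gamma,\Gamma'}))$ dans $\pic(S)=H^1_{\text{\rm pl}}(S,\gm)$ correspond canoniquement \`a $(x\times y)^*(t(W^{\rm log}))$ dans $H^1_{\text{\rm kpl}}(S,\gm)$ via le morphisme naturel, dont l'injectivit\'e est fournie par le corollaire \ref{spectralpic}. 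La commutativit\'e du diagramme en d\'ecoule sans effort suppl\'ementaire.

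Pour le point $(2)$, $S$ \'etant un trait, on a $\pic(S)=0$, et le corollaire \ref{spectralpic} fournit un isomorphisme canonique $H^1_{\text{\rm kpl}}(S,\gm)\simeq\mathbb{Q}/\mathbb{Z}$ qui est l'\'egalit\'e verticale de droite du diagramme. Je commencerais par montrer que l'accouplement logarithmique se factorise \`a travers $\Phi(k)\times\Phi'(k)$. Si $x\in\A(S)$ v\'erifie $\bar x=0\in\Phi(k)$, alors $x$ provient d'une section de $\A^0$. En appliquant la proposition \ref{restrwlog} $(3)$ au couple orthogonal $(\Gamma,\Gamma')=(0,\Phi')$, la biextension $W^{\rm log}|_{(\A^0,\A^t)}$ n'est autre que $W^{0,\Phi'}$, qui est une biextension fppf classique \`a valeurs dans $\gm$. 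L'image r\'eciproque $(x\times y)^*(t(W^{\rm log}))$ vit donc dans $\pic(S)=0$, et sym\'etriquement pour $y\in\A^{t,0}(S)$. L'accouplement $<-,->^{\rm log}$ descend ainsi en une application bilin\'eaire $\Phi(k)\times\Phi'(k)\to\mathbb{Q}/\mathbb{Z}$.

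Reste \`a identifier cette factorisation avec l'accouplement de monodromie. Par fonctorialit\'e de la suite spectrale comparant $H^*_{\text{\rm kpl}}$ et $H^*_{\text{\rm pl}}$ via $\varepsilon$, appliqu\'ee au morphisme $(x,y):S\to\A\times\A^t$, l'image de $(x\times y)^*(t(W^{\rm log}))$ par la fl\`eche d'ar\^ete $H^1_{\text{\rm kpl}}(S,\gm)\to (R^1\varepsilon_*\gm)(S)$ co\"incide avec l'\'evaluation du morphisme de faisceaux $\partial(W^{\rm log}):\Phi\otimes\Phi'\to i_*(\mathbb{Q}/\mathbb{Z})_k$ sur la section $(\bar x,\bar y)$, o\`u $\partial$ est la fl\`eche de \eqref{spectral2}. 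Comme $\pic(S)=0$, cette fl\`eche d'ar\^ete est un isomorphisme, et la proposition \ref{restrwlog} $(2)$ identifie $\partial(W^{\rm log})$ \`a l'accouplement de monodromie. Ceci conclut.

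Le point d\'elicat sera la v\'erification que la fl\`eche d'ar\^ete sur $\A\times\A^t$ d\'efinissant $\partial$ est compatible, via $(x\times y)^*$, avec la fl\`eche d'ar\^ete analogue sur $S$. Cela rel\`eve de la naturalit\'e de la suite spectrale de Leray pour $\varepsilon$ par rapport au morphisme de topos induit par $(x,y)$, mais m\'erite d'\^etre trait\'ee explicitement en revenant \`a la construction du $\gm$-torseur sous-jacent \`a une biextension et en invoquant l'identification $\biext^1\simeq\ext^1(-\otimes^{\mathbb{L}}-,\gm)$ d\'ej\`a utilis\'ee dans la preuve du th\'eor\`eme \ref{sga7style}.
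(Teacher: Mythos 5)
Votre argument est correct et suit pour l'essentiel la même route que le texte : le point $(1)$ se déduit de la proposition \ref{restrwlog} $(3)$, et le point $(2)$ de l'identification de $\partial(W^{\rm log})$ avec l'accouplement de monodromie (proposition \ref{restrwlog} $(2)$, via le théorème \ref{retourmonodromie}) jointe au fait que $\partial(E)$ est la faisceautisation de $(x,y)\mapsto(x\times y)^*(t(E))$. Le \emph{point délicat} que vous signalez sans le traiter est exactement l'observation que l'article énonce lui aussi sans plus de détails dans sa démonstration, de sorte que votre rédaction n'omet rien que le texte fournirait.
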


\begin{proof}
Le point $(1)$ est une cons\'equence imm\'ediate de la proposition \ref{restrwlog}. Le point $(2)$ d\'ecoule du th\'eor\`eme \ref{retourmonodromie} $(ii)$, et de l'observation suivante (que l'on applique \`a la biextension $W^{\rm log}$) : \'etant donn\'e une biextension $E\in \biext^1_{\text{\rm kpl}}(P,Q;\gm)$, le morphisme $\partial(E):P\otimes Q\rightarrow R^1\varepsilon_* \gm$ de la suite spectrale \eqref{spectral2} s'obtient en faisceautisant (pour la topologie fppf, par rapport \`a l'argument $T$) le morphisme $P(T)\otimes Q(T)\rightarrow H^1_{\text{\rm kpl}}(T, \gm)$ qui \`a $x\otimes y$ associe $(x\times y)^*(t(E))$, $T$ d\'esignant ici un $S$-sch\'ema variable que l'on munit de la log structure image r\'eciproque de celle de $S$.
\end{proof}

En fait, il ressort de la lecture de \cite{mb} que l'accouplement de classes logarithmique peut s'exprimer en termes d'intersection. Ceci fait l'objet du paragraphe qui suit.


\subsection{Torseurs cubistes}

Nous renvoyons le lecteur \`a \cite[chap. I, 2.4.5]{mb} pour la notion de torseur cubiste dans un cadre toposique g\'en\'eral.
\'Etant donn\'es deux groupes commutatifs $A$ et $G$ d'un topos, la cat\'egorie des $G$-torseurs cubistes sur $A$, munie du produit tensoriel, est une cat\'egorie de Picard strictement commutative (au sens de \cite[expos\'e XVIII, 1.4.2]{gro4}). On note $\cub(A,G)$ le groupe des classes d'isomorphie d'objets de cette cat\'egorie.

Nous allons nous int\'eresser ici aux $\gm$-torseurs cubistes dans la cat\'egorie des faisceaux pour la topologie Kummer plate.

Nous reprenons les hypoth\`eses et notations du paragraphe pr\'ec\'edent.

\begin{rmq}
Comme le morphisme structural $f:\A\rightarrow S$ est lisse, la log structure induite sur $\A$ par celle de $S$ est la log structure associ\'ee \`a l'ouvert $f^{-1}(U)$. Ce dernier est le compl\'ementaire du diviseur $f^*(D)$ \'egal \`a la somme des fibres de $\A$ au-dessus des points de $D$. Les $\gm$-torseurs sur $\A$ pour la topologie Kummer plate sont donc d\'ecrits par le th\'eor\`eme \ref{logpic}.
\end{rmq}

\begin{prop}
\label{prolongecub}
Le morphisme de restriction
$$
\cub_{\text{\rm kpl}}(\A,\gm)\longrightarrow \cub_{\text{\rm pl}}(\A_U,\gmu)
$$
est un isomorphisme. Si $L$ est un $\gm$-torseur cubiste sur $\A_U$, nous noterons $L^{\rm log}$ son unique prolongement cubiste.
\end{prop}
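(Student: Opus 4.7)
Le plan est de transposer la stratégie du théorème \ref{sga7style} au cadre des torseurs cubistes, en s'appuyant sur la description structurelle de $\cub$ en termes d'extensions et de biextensions symétriques due à Breen et Moret-Bailly. Plus précisément, pour un schéma en groupes commutatif $A$ sur la base et un faisceau abélien $G$, on dispose (cf. \cite[chap. I, 2.4]{mb}) d'une suite exacte canonique
$$
0 \longrightarrow \ext^1(A,G) \longrightarrow \cub(A,G) \stackrel{\Lambda^2}{\longrightarrow} \biext^1_{\text{\rm sym}}(A,A;G) \longrightarrow \ext^2(A,G),
$$
où $\Lambda^2(L) := m^*L \otimes p_1^*L^{-1} \otimes p_2^*L^{-1}$ est la biextension de Mumford associée à $L$, laquelle est naturellement munie d'une structure symétrique. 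Cette suite est fonctorielle relativement au site considéré ; en particulier, elle est compatible avec le morphisme de restriction $j$ de la topologie Kummer log plate sur $(fs/S)$ vers la topologie fppf classique sur $(Sch/U)$.

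En comparant les deux suites exactes au moyen des flèches de restriction, on obtient un diagramme commutatif à lignes exactes. D'après le corollaire \ref{corutile}, appliqué successivement à $P = \A$ puis à $P = Q = \A$ --- les groupes de composantes $\Phi_s$ en chaque point $s$ de $D$ étant finis, donc annulés par un entier uniforme --- les flèches verticales en $\ext^1(\A,\gm)$ et en $\biext^1(\A,\A;\gm)$ sont des isomorphismes. La condition de symétrie étant préservée par restriction, il en va de même en $\biext^1_{\text{\rm sym}}$. Par un argument spectral analogue à celui de la suite \eqref{spectral1}, on vérifie de surcroît que la flèche en $\ext^2(\A,\gm)$ est injective.

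Il suffit alors d'appliquer le lemme des cinq au diagramme pour conclure que la restriction $\cub_{\text{\rm kpl}}(\A,\gm) \to \cub_{\text{\rm pl}}(\A_U,\gmu)$ est un isomorphisme. L'obstacle principal me paraît être la vérification soigneuse de la fonctorialité de la suite exacte ci-dessus relativement au morphisme de sites $j$, ainsi que la compatibilité du foncteur $\Lambda^2$ avec la symétrie et avec le changement de topologie ; ces points relèvent essentiellement de la fonctorialité des constructions catégoriques sous-jacentes et des foncteurs dérivés.
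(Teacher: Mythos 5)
Votre strat\'egie --- ramener le prolongement des torseurs cubistes \`a celui des biextensions au moyen de la construction de Mumford $\Lambda(L)=m^*L\otimes p_1^*L^{-1}\otimes p_2^*L^{-1}$, puis invoquer le corollaire \ref{corutile} --- est pour l'essentiel celle du texte, qui cite \cite[chap. I, 2.5.4]{mb} pour se ramener au prolongement de certaines biextensions de $(\A_U,\A_U)$ par $\gmu$. Votre mise en forme par une suite exacte \`a cinq termes et le lemme des cinq introduit cependant une \'etape non justifi\'ee et non anodine : l'injectivit\'e de la fl\`eche de restriction $\ext^2_{\text{\rm kpl}}(\A,\gm)\rightarrow\ext^2_{\text{\rm pl}}(\A_U,\gmu)$. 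Vous l'affirmez \og{}par un argument spectral analogue \`a celui de la suite \eqref{spectral1}\fg{}, mais cette suite compare les topologies kpl et fppf sur la m\^eme base, et non la restriction \`a l'ouvert $U$ ; les \'enonc\'es de degr\'e $1$ du texte (proposition \ref{h1gmlog}, lemme \ref{ext1gmlog}) reposent sur les annulations $R^1q_*\gmlog=0$ et $R^1(\jcl)_*\gmu=0$, dont les analogues en degr\'e $2$ ne sont \'etablis nulle part. Or cette injectivit\'e est indispensable \`a la surjectivit\'e dans votre argument : la biextension sym\'etrique prolongeant $\Lambda(L_U)$ doit \^etre d'image nulle dans $\ext^2_{\text{\rm kpl}}(\A,\gm)$ pour provenir d'un torseur cubiste.

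La d\'emonstration du texte contourne ce point : on prolonge d'abord le $\gm$-torseur sous-jacent (ce que permet, avec beaucoup de souplesse, le th\'eor\`eme \ref{logpic} appliqu\'e \`a $\A$ muni du diviseur $f^*(D)$, cf. la remarque qui pr\'ec\`ede la proposition), puis la structure cubiste, laquelle \'equivaut d'apr\`es \cite[chap. I, 2.5.4]{mb} \`a une structure de biextension sur $\Lambda(L)$ ; c'est ce dernier prolongement qui rel\`eve du corollaire \ref{corutile}. Deux points secondaires demanderaient aussi \`a \^etre pr\'ecis\'es dans votre r\'edaction : l'exactitude de la suite que vous attribuez \`a \cite[chap. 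I, 2.4]{mb} (notamment en $\biext^1_{\text{\rm sym}}$ \`a valeurs dans $\ext^2$, qui rel\`eve plut\^ot du formalisme de Breen), et le passage de $\biext^1$ \`a $\biext^1_{\text{\rm sym}}$, pour lequel il faut observer que l'unique prolongement d'une biextension sym\'etrique est sym\'etrique (appliquer l'unicit\'e \`a la biextension permut\'ee) et que la donn\'ee de sym\'etrie se prolonge elle aussi.
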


\begin{proof}
En vertu de \cite[chap. I, 2.5.4]{mb}, on peut se ramener \`a un probl\`eme de prolongement de certaines biextensions de $(\A_U,\A_U)$ par $\gm$. L'existence et l'unicit\'e d'un tel prolongement d\'ecoulent alors du corollaire \ref{corutile}.
\end{proof}

La proposition ci-dessous est une simple retranscription de \cite[chap. III, 1.4]{mb}.

\begin{prop}
\begin{enumerate}
\item[$(i)$] Soit $L\in\pic(\A_U)$ un fibr\'e en droites sur $\A_U$. Soit $n$ un entier qui annule le groupe $\Phi$ des composantes de $\A$. Alors $L^{\otimes 2n}$ admet un unique prolongement cubiste \`a $\A$, on le note $(L^{\otimes 2n})^{\sim}$.

\item[$(ii)$] Soit $\sigma$ une section m\'eromorphe de $L$, et soit $(\sigma^{\otimes 2n})^{\sim}$ la section m\'eromorphe de $(L^{\otimes 2n})^{\sim}$ prolongeant $\sigma^{\otimes 2n}$. On d\'efinit un \'el\'ement $\diviseur(\sigma)^{\sim}$ de $\DivRat(\A,f^*(D))$ en posant
$$
\diviseur(\sigma)^{\sim}=\frac{1}{2n}\diviseur((\sigma^{\otimes 2n})^{\sim})
$$
On constate que $\diviseur(\sigma)^{\sim}$ est un prolongement de $\diviseur(\sigma)$, et satisfait le th\'eor\`eme du cube (\emph{i. e.} son $\mathcal{D}_3$ est principal, avec les notations de \cite[chap. I, 1.1]{mb}). En outre, il est caract\'eris\'e par ces propri\'et\'es, \`a un \'el\'ement de $\oplus_{s\in D}\, \mathbb{Z}.\A_s$ pr\`es.

\item[$(iii)$] Le prolongement cubiste logarithmique $L^{\rm log}$ de $L$ sur $\A$ est la classe de $\diviseur(\sigma)^{\sim}$ modulo $\Divp(\A)$.
\end{enumerate}
\end{prop}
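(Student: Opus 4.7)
L'\'enonc\'e \'etant annonc\'e comme une retranscription de \cite[chap. III, 1.4]{mb}, le plan consiste \`a traduire cet article dans notre contexte logarithmique en exploitant la proposition \ref{prolongecub}. Pour le point $(i)$, l'existence et l'unicit\'e d'un prolongement cubiste \emph{classique} de $L^{\otimes 2n}$ \`a $\A$ est le r\'esultat originel de Moret-Bailly : l'obstruction \`a prolonger la structure cubiste du tenseur d'une puissance de $L$ \`a la N\'eron vit dans un groupe tu\'e par tout entier qui annule $\Phi$, et la multiplication par $2n$ suffit. On peut aussi le voir comme cons\'equence de la proposition \ref{prolongecub} : le prolongement cubiste logarithmique $L^{\rm log}$ existe d\'ej\`a et, gr\^ace au corollaire \ref{spectralpic} appliqu\'e \`a $\A$ muni de la log structure associ\'ee \`a $f^{-1}(U)$ (cf. proposition-d\'efinition \ref{logstrdefs} $(3)$), le morphisme $\pic(\A) \to H^1_{\text{\rm kpl}}(\A,\gm)$ est injectif de conoyau annul\'e par tout entier tuant les groupes de composantes $\Phi_s$ ; donc $(L^{\rm log})^{\otimes 2n}$ descend \`a un \'el\'ement classique de $\cub_{\text{\rm pl}}(\A,\gm)$.

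Pour le point $(ii)$, on part d'une section m\'eromorphe $\sigma$ de $L$ sur $\A_U$, d'o\`u une section m\'eromorphe $\sigma^{\otimes 2n}$ de $L^{\otimes 2n}|_{\A_U}$. Comme $\A_U$ est sch\'ematiquement dense dans $\A$, cette section se prolonge de mani\`ere unique en une section m\'eromorphe $(\sigma^{\otimes 2n})^{\sim}$ du fibr\'e cubiste $(L^{\otimes 2n})^{\sim}$ construit en $(i)$. Son diviseur appartient \`a $\Div(\A)$ et prolonge $2n\cdot \diviseur(\sigma)$ ; en le divisant par $2n$ on obtient l'\'el\'ement $\diviseur(\sigma)^{\sim}$ de $\DivRat(\A,f^*(D))$ voulu. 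Le fait que $\mathcal{D}_3(\diviseur(\sigma)^{\sim})$ soit principal est la traduction divisorielle directe de la structure cubiste de $(L^{\otimes 2n})^{\sim}$. L'ambigu\"it\'e mesur\'ee par $\oplus_{s\in D}\, \mathbb{Z}.\A_s$ s'obtient en notant que deux prolongements cubistes classiques de $L^{\otimes 2n}$ diff\`erent d'un fibr\'e cubiste trivial sur $\A_U$ : un tel fibr\'e, par rigidit\'e des structures cubistes sur une base connexe et connexit\'e des $\A_s^0$, est n\'ecessairement de la forme $\oplus_{s\in D} a_s.\A_s$ avec $a_s\in\mathbb{Z}$, divis\'es par $2n$ et donc rationnels, mais l'ambigu\"it\'e sur $\diviseur(\sigma)^{\sim}$ lui-m\^eme est enti\`ere car on travaille \`a prolongement cubiste entier pr\`es de $L^{\otimes 2n}$.

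Le point $(iii)$ rassemble les deux pr\'ec\'edents. Par le th\'eor\`eme \ref{logpic} appliqu\'e \`a $\A$, la classe de $\diviseur(\sigma)^{\sim}$ modulo $\Divp(\A)$ est un \'el\'ement de $H^1_{\text{\rm kpl}}(\A,\gm)$ qui prolonge $L$ et qui, par $(ii)$, h\'erite d'une structure cubiste en cohomologie Kummer log plate (puisqu'apr\`es multiplication par $2n$ on retrouve le fibr\'e cubiste classique $(L^{\otimes 2n})^{\sim}$, et la multiplication par $2n$ est injective sur $\cub_{\text{\rm kpl}}$ compte tenu de l'unicit\'e de la proposition \ref{prolongecub}). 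Par l'unicit\'e du prolongement cubiste logarithmique de $L$ fournie par cette m\^eme proposition, cette classe co\"incide n\'ecessairement avec $L^{\rm log}$. Le principal point d\'elicat est la v\'erification du dictionnaire entre structures cubistes sur les fibr\'es et propri\'et\'e du th\'eor\`eme du cube sur les diviseurs, mais ce dictionnaire, \'etabli dans \cite[chap. I]{mb}, se transpose sans changement au cadre log plat gr\^ace au th\'eor\`eme \ref{logpic}.
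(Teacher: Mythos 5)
Your overall route is the same as the paper's: points $(i)$ and $(ii)$ are, in the end, Moret-Bailly's results (\cite[chap. II, 1.2.1]{mb} and \cite[chap. III, 1.4]{mb}), and point $(iii)$ follows by observing that the class of $\diviseur(\sigma)^{\sim}$ modulo $\Divp(\A)$ is, via the th\'eor\`eme \ref{logpic} description of $H^1_{\text{\rm kpl}}(\A,\gm)$, a cubist $\gm$-torsor for the Kummer log flat topology extending $L$, hence equals $L^{\rm log}$ by the uniqueness of the proposition \ref{prolongecub}. That is exactly the paper's argument for $(iii)$, and your treatment of it is correct.

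However, your ``alternative'' proof of $(i)$ contains a genuine error. You claim that the corollaire \ref{spectralpic}, applied to $\A$ with the log structure defined by $f^{-1}(U)$, shows that $\pic(\A)\to H^1_{\text{\rm kpl}}(\A,\gm)$ is injective with cokernel annihilated by any integer killing the $\Phi_s$. The cokernel in that exact sequence is $\bigoplus_\gamma (\mathbb{Q}/\mathbb{Z}).\gamma$, a sum indexed by the irreducible components of the fibres $\A_s$; it is divisible and annihilated by no integer whatsoever. What is actually true --- and what you would need --- is that the image of the particular class $L^{\rm log}$ in this cokernel is $2n$-torsion, i.e.\ that the vertical coefficients $m(\gamma)$ of $\diviseur(\sigma)^{\sim}$ lie in $\frac{1}{2n}\mathbb{Z}$; but that is precisely the content of Moret-Bailly's theorem (restated in $(ii)$), so this ``alternative'' argument is circular and does not bypass the citation. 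Relatedly, your discussion of the $\oplus_{s\in D}\mathbb{Z}.\A_s$ ambiguity in $(ii)$ starts from ``two classical cubist extensions of $L^{\otimes 2n}$ differ by\dots'', whereas $(i)$ asserts that this extension is unique; the ambiguity concerns the divisor-level characterisation (extension of $\diviseur(\sigma)$ with principal $\mathcal{D}_3$), not the cubist line bundle, and its integrality again comes from \cite[chap. III, 1.4]{mb} rather than from the computation you sketch. Since your primary line for $(i)$ and $(ii)$ is the same appeal to \cite{mb} that the paper makes, the proof stands once these side arguments are excised, but as written they would not survive scrutiny.
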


\begin{proof}
$(i)$ Voir \cite[chap. II, 1.2.1]{mb} dans le cas o\`u $S$ est un trait. Le cas o\`u $S$ est r\'egulier de dimension $1$ en d\'ecoule ais\'ement. $(ii)$ Voir \cite[chap. III, 1.4]{mb} dans le cas o\`u $S$ est un trait. $(iii)$ D'apr\`es le point pr\'ec\'edent, la classe de $\diviseur(\sigma)^{\sim}$ est un $\gm$-torseur (pour la topologie Kummer log plate) cubiste qui prolonge $L$. Il est donc \'egal \`a $L^{\rm log}$ par unicit\'e d'un tel prolongement (prop. \ref{prolongecub}).
\end{proof}

On peut \`a pr\'esent traduire l'accouplement de classes en termes d'intersection.

\begin{prop}
\label{intersection}
Soit $(x,y)\in \A(S)\times\A^t(S)$. Alors $y$ d\'efinit un $\gm$-torseur (cubiste) $L_y$ sur $\A_U$, lequel admet un unique prolongement cubiste $L_y^{\rm log}$ sur $\A$.
\begin{enumerate}
\item[$(a)$] Nous avons l'\'egalit\'e
$$
<x,y>^{\rm log}=x^*L_y^{\rm log}
$$
\item[$(b)$] Soit $\sigma$ une section m\'eromorphe de $L_y$ telle que $x$ ne rencontre pas $\diviseur(\sigma)$ sur la fibre g\'en\'erique, alors $L_y^{\rm log}$ est la classe du diviseur
$$
\diviseur(\sigma)^{\sim}=\overline{\diviseur(\sigma)}+\sum_{s\in D}\sum_{\gamma\in\Phi_{s}} m(\gamma).\gamma
$$
o\`u le premier terme de la somme est l'adh\'erence sch\'ematique de $\diviseur(\sigma)$ dans $\A$, et o\`u les $m(\gamma)$ appartiennent \`a $\frac{1}{2n}\mathbb{Z}$. L'accouplement de classes se calcule alors par intersection
$$
<x,y>^{\rm log}=[x^*(\overline{\diviseur(\sigma)})]+\sum_{s\in D}  [m(\gamma_s(x)).s]
$$
o\`u, pour chaque $s\in D$, $\gamma_s(x)$ est la composante de $\A_s$ dans laquelle $x$ se r\'eduit.
\item[$(c)$] Supposons que $S$ soit un trait, de valuation $v$. Avec les notations pr\'ec\'edentes, et modulo l'isomorphisme canonique $H^1_{\text{\rm kpl}}(S, \gm)\simeq \mathbb{Q}/\mathbb{Z}$, nous avons
$$
<x,y>^{\rm log}=<\diviseur(\sigma),x>_v ~({\rm mod}~\mathbb{Z})
$$
o\`u $<\diviseur(\sigma),x>_v$ est le symbole d\'efini dans \cite[chap. III, 1.3]{mb} (lequel est l'oppos\'e du symbole de N\'eron en vertu de \cite[chap. III, 1.3.1]{mb}).
\item[$(d)$] Avec les hypoth\`eses et notations de $(c)$, nous avons
$$
(x,y)_s^{\rm mono}=<\diviseur(\sigma),x>_v ~({\rm mod}~\mathbb{Z})
$$
o\`u $(x,y)^{\rm mono}_s$ est le r\'esultat de l'accouplement de monodromie entre les composantes $\gamma_s(x)$ et $\gamma_s(y)$ de $\A_s$ dans lesquelles $x$ et $y$ se r\'eduisent respectivement.
\end{enumerate}
\end{prop}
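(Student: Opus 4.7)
The plan is to reduce each item to the explicit descriptions of $L_y^{\rm log}$ provided by Propositions \ref{restrwlog} and \ref{prolongecub}, and then to transport the result through the divisorial description of $H^1_{\text{\rm kpl}}(S,\gm)$ furnished by Theorem \ref{logpic}. For $(a)$, I would restrict the biextension $W^{\rm log}$ along the morphism $(\mathrm{id}_{\A}, y\circ \pi):\A\to \A\times_S \A^t$, where $\pi:\A\to S$ is the structural morphism; this produces a Kummer log flat extension of $\A$ by $\gm$ whose underlying $\gm$-torseur inherits a cubistic structure from the partial biextension group law (compare \cite[chap.\,I, 2.5.4]{mb}). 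On $\A_U$ it restricts to the Poincar\'e extension, whose underlying torseur is $L_y$; the uniqueness of cubistic prolongations (Proposition \ref{prolongecub}) therefore identifies this torseur with $L_y^{\rm log}$. Pulling back along $x$ yields the claimed equality $\langle x,y\rangle^{\rm log}=(x\times y)^*(t(W^{\rm log}))=x^*L_y^{\rm log}$.

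Part $(b)$ is then a direct invocation of the proposition preceding the statement: choose a meromorphic section $\sigma$ of $L_y$, write $L_y^{\rm log}$ as the class of $\diviseur(\sigma)^{\sim}$ in $\DivRat(\A,f^*(D))/\Divp(\A)$, and use the generic non-meeting hypothesis to pull back the decomposition $\diviseur(\sigma)^{\sim}=\overline{\diviseur(\sigma)}+\sum_{s,\gamma} m(\gamma)\cdot\gamma$ along $x$; only the component indexed by $\gamma=\gamma_s(x)$ meets the image of $x$ above $s$, which produces the stated intersection formula. For $(c)$, when $S$ is a trait Theorem \ref{logpic} combined with $\pic(S)=0$ identifies $H^1_{\text{\rm kpl}}(S,\gm)$ with $\DivRat(S,D)/\Divp(S)\simeq \mathbb{Q}/\mathbb{Z}$ via $\alpha\cdot s\mapsto \alpha \bmod \mathbb{Z}$; the formula from $(b)$ then becomes a rational multiple of $s$ whose coefficient coincides, by the very definition of the Moret-Bailly symbol in \cite[chap.\,III, 1.3]{mb}, with $\langle \diviseur(\sigma),x\rangle_v$.

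Part $(d)$ is essentially formal: Proposition \ref{proppairings}(2) identifies $\langle x,y\rangle^{\rm log}$ with the monodromy pairing of the components $\gamma_s(x)$ and $\gamma_s(y)$, while $(c)$ identifies the same quantity with $\langle\diviseur(\sigma),x\rangle_v\bmod\mathbb{Z}$. The main obstacle lies in the identification made in $(a)$: one must verify that the cubistic structure inherited by the restricted torseur really lives in the Kummer log flat category, and that under restriction to $\A_U$ it coincides with the classical cubistic structure of the Poincar\'e bundle, so that the uniqueness clause of Proposition \ref{prolongecub} applies. Once this step is secured, $(b)$--$(d)$ reduce to short divisor-theoretic computations.
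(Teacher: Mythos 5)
Your argument is correct and follows essentially the same route as the paper: part $(a)$ is obtained by pulling back $W^{\rm log}$ along $\mathrm{id}_{\A}\times y$, observing that the underlying $\gm$-torseur of the resulting Kummer log flat extension is a cubistic prolongation of $L_y$, and invoking the uniqueness of Proposition \ref{prolongecub}, after which $(b)$--$(d)$ are the same divisor-theoretic translations via Theorem \ref{logpic}, \cite[chap.\ III, 1.4]{mb} and Proposition \ref{proppairings}\,$(2)$. The worry you raise about the cubistic structure is harmless: the partial group law of the biextension directly equips the restricted torseur with a cubist structure in the category of kpl sheaves, compatibly with the classical one on $\A_U$.
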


\begin{proof}
$(a)$ Le pull-back $({\rm id}_{\A}\times y)^*(W^{\rm log})$ est une extension de $\A$ par $\gm$ (pour la topologie log plate), par cons\'equent le $\gm$-torseur sous-jacent $({\rm id}_{\A}\times y)^*(t(W^{\rm log}))$ est un prolongement cubiste de $L_y$, donc est \'egal \`a $L_y^{\rm log}$ par unicit\'e. On peut d'autre part \'ecrire $<x,y>^{\rm log}=(x\times y)^*(t(W^{\rm log}))=x^*({\rm id}_{\A}\times y)^*(t(W^{\rm log}))$, d'o\`u le r\'esultat. Le point $(b)$ ne fait que traduire le premier en termes d'intersection. Le point $(c)$ est une cons\'equence du $(b)$ et de \cite[chap. III, 1.4]{mb}. Le point $(d)$ d\'ecoule du $(c)$ et de la proposition \ref{proppairings} $(2)$.
\end{proof}

\begin{rmq}
Nous retrouvons, dans le point $(c)$ de la proposition \ref{intersection}, l'expression de l'accouplement de monodromie en termes du symbole de N\'eron, 
due \`a Bosch et Lorenzini \cite[Theorem 4.4]{bl}. Il ressort d'autre part de la proposition \ref{intersection} $(d)$ que l'accouplement de monodromie et celui de Moret-Bailly sont bien \'egaux (le signe n\'egatif apparaissant dans \cite[Remark 4.5]{bl} nous semble erron\'e, le symbole utilis\'e par Moret-Bailly \'etant l'oppos\'e du symbole de N\'eron).
\end{rmq}

\begin{rmq}
Soit $(x,y)\in \A(S)\times\A^t(S)$ tel que $<x,y>^{\rm log}$ soit de $n$-torsion (ce qui est le cas en particulier si $x$ ou $y$ est de $n$-torsion). D'apr\`es la proposition \ref{proppairings} $(2)$, la fl\`eche $\nu$ du corollaire \ref{spectralpic} permet de retrouver l'accouplement de monodromie, plus exactement nous avons
$$
\nu(<x,y>^{\rm log})=\sum_{s\in D} (x,y)^{\rm mono}_s.s
$$
Par cons\'equent, avec les notations de la remarque \ref{nthpower}, tout $n$-rel\`evement du diviseur ci-dessus est une puissance $n$-i\`eme dans $\pic(S)$, puisque $<x,y>^{\rm log}$ provient d'un $\mu_n$-torseur pour la topologie log plate. L'auteur ignore si cette curiosit\'e \'etait d\'ej\`a connue.
\end{rmq}



\vskip 1cm

Jean Gillibert
\smallskip

Institut de Math\'ematiques de Bordeaux

Universit\'e Bordeaux 1

351, cours de la Lib\'eration

33405 Talence Cedex

France

\medskip

\texttt{jean.gillibert@math.u-bordeaux1.fr}

\end{document}